\begin{document}

	\newtheorem{theorem}{Theorem}[section]
	\newtheorem{prop}[theorem]{Proposition}
	\newtheorem{lemma}[theorem]{Lemma}
	\newtheorem{cor}[theorem]{Corollary}
	\newtheorem{cond}[theorem]{Condition}
	\newtheorem{ing}[theorem]{Ingredients}
	\newtheorem{conj}[theorem]{Conjecture}
	\newtheorem{claim}[theorem]{Claim}
	\newtheorem{constr}[theorem]{Construction}
	\newtheorem{rem}[theorem]{Remark}
	
	\newtheorem*{theorem*}{Theorem}
	\newtheorem*{modf}{Modification for arbitrary $n$}
	\newtheorem{qn}[theorem]{Question}
	\newtheorem{condn}[theorem]{Condition}
	\newtheorem*{BGIT}{Bounded Geodesic Image Theorem}
	\newtheorem*{BI}{Behrstock Inequality}
	\newtheorem*{QCH}{Wise's Quasiconvex Hierarchy Theorem}
	
	\theoremstyle{definition}
	\newtheorem{defn}[theorem]{Definition}
	\newtheorem{eg}[theorem]{Example}
	\newtheorem{rmk}[theorem]{Remark}
	\newtheorem{remark}[theorem]{Remark}
	
	\newcommand{\map}{\rightarrow}
	\newcommand{\boundary}{\partial}
	\newcommand{\C}{{\mathbb C}}
	\newcommand{\integers}{{\mathbb Z}}
	\newcommand{\natls}{{\mathbb N}}
	\newcommand{\ratls}{{\mathbb Q}}
	\newcommand{\reals}{{\mathbb R}}
	\newcommand{\proj}{{\mathbb P}}
	\newcommand{\lhp}{{\mathbb L}}
	\newcommand{\tr}{{\operatorname{Tread}}}
	\newcommand{\rs}{{\operatorname{Riser}}}
	\newcommand{\tube}{{\mathbb T}}
	\newcommand{\cusp}{{\mathbb P}}
	\newcommand\AAA{{\mathcal A}}
	\newcommand\BB{{\mathcal B}}
	\newcommand\CC{{\mathcal C}}
	\newcommand\ccd{{{\mathcal C}_\Delta}}
	\newcommand\DD{{\mathcal D}}
	\newcommand\EE{{\mathcal E}}
	\newcommand\FF{{\mathcal F}}
	\newcommand\GG{{\mathcal G}}
	\newcommand\HH{{\mathcal H}}
	\newcommand\II{{\mathcal I}}
	\newcommand\JJ{{\mathcal J}}
	\newcommand\KK{{\mathcal K}}
	\newcommand\LL{{\mathcal L}}
	\newcommand\MM{{\mathcal M}}
	\newcommand\NN{{\mathcal N}}
	\newcommand\OO{{\mathcal O}}
	\newcommand\PP{{\mathcal P}}
	\newcommand\QQ{{\mathcal Q}}
	\newcommand\RR{{\mathcal R}}
	\newcommand\SSS{{\mathcal S}}
	
	\newcommand\TT{{\mathcal T}}
	\newcommand\ttt{{\mathcal T}_T}
	\newcommand\tT{{\widetilde T}}
	\newcommand\UU{{\mathcal U}}
	\newcommand\VV{{\mathcal V}}
	\newcommand\WW{{\mathcal W}}
	\newcommand\XX{{\mathcal X}}
	\newcommand\YY{{\mathcal Y}}
	\newcommand\ZZ{{\mathcal Z}}
	\newcommand\CH{{\CC\HH}}
	\newcommand\TC{{\TT\CC}}
	\newcommand\EXH{{ \EE (X, \HH )}}
	\newcommand\GXH{{ \GG (X, \HH )}}
	\newcommand\GYH{{ \GG (Y, \HH )}}
	\newcommand\PEX{{\PP\EE  (X, \HH , \GG , \LL )}}
	\newcommand\MF{{\MM\FF}}
	\newcommand\PMF{{\PP\kern-2pt\MM\FF}}
	\newcommand\ML{{\MM\LL}}
	\newcommand\mr{{\RR_\MM}}
	\newcommand\tmr{{\til{\RR_\MM}}}
	\newcommand\PML{{\PP\kern-2pt\MM\LL}}
	\newcommand\GL{{\GG\LL}}
	\newcommand\Pol{{\mathcal P}}
	\newcommand\half{{\textstyle{\frac12}}}
	\newcommand\Half{{\frac12}}
	\newcommand\Mod{\operatorname{Mod}}
	\newcommand\Area{\operatorname{Area}}
	\newcommand\ep{\epsilon}
	\newcommand\hhat{\widehat}
	\newcommand\Proj{{\mathbf P}}
	\newcommand\U{{\mathbf U}}
	\newcommand\Hyp{{\mathbf H}}
	\newcommand\D{{\mathbf D}}
	\newcommand\Z{{\mathbb Z}}
	\newcommand\R{{\mathbb R}}
	\newcommand\bN{\mathbb{N}}
	\newcommand\s{{\Sigma}}
	\renewcommand\P{{\mathbb P}}
	\newcommand\Q{{\mathbb Q}}
	\newcommand\E{{\mathbb E}}
	\newcommand\til{\widetilde}
	\newcommand\length{\operatorname{length}}
	\newcommand\BU{\operatorname{BU}}
	\newcommand\gesim{\succ}
	\newcommand\lesim{\prec}
	\newcommand\simle{\lesim}
	\newcommand\simge{\gesim}
	\newcommand{\simmult}{\asymp}
	\newcommand{\simadd}{\mathrel{\overset{\text{\tiny $+$}}{\sim}}}
	\newcommand{\ssm}{\setminus}
	\newcommand{\diam}{\operatorname{diam}}
	\newcommand{\pair}[1]{\langle #1\rangle}
	\newcommand{\T}{{\mathbf T}}
	\newcommand{\inj}{\operatorname{inj}}
	\newcommand{\pleat}{\operatorname{\mathbf{pleat}}}
	\newcommand{\short}{\operatorname{\mathbf{short}}}
	\newcommand{\vertices}{\operatorname{vert}}
	\newcommand{\collar}{\operatorname{\mathbf{collar}}}
	\newcommand{\bcollar}{\operatorname{\overline{\mathbf{collar}}}}
	\newcommand{\I}{{\mathbf I}}
	\newcommand{\tprec}{\prec_t}
	\newcommand{\fprec}{\prec_f}
	\newcommand{\bprec}{\prec_b}
	\newcommand{\pprec}{\prec_p}
	\newcommand{\ppreceq}{\preceq_p}
	\newcommand{\sprec}{\prec_s}
	\newcommand{\cpreceq}{\preceq_c}
	\newcommand{\cprec}{\prec_c}
	\newcommand{\topprec}{\prec_{\rm top}}
	\newcommand{\Topprec}{\prec_{\rm TOP}}
	\newcommand{\fsub}{\mathrel{\scriptstyle\searrow}}
	\newcommand{\bsub}{\mathrel{\scriptstyle\swarrow}}
	\newcommand{\fsubd}{\mathrel{{\scriptstyle\searrow}\kern-1ex^d\kern0.5ex}}
	\newcommand{\bsubd}{\mathrel{{\scriptstyle\swarrow}\kern-1.6ex^d\kern0.8ex}}
	\newcommand{\fsubeq}{\mathrel{\raise-.7ex\hbox{$\overset{\searrow}{=}$}}}
	\newcommand{\bsubeq}{\mathrel{\raise-.7ex\hbox{$\overset{\swarrow}{=}$}}}
	\newcommand{\tw}{\operatorname{tw}}
	\newcommand{\base}{\operatorname{base}}
	\newcommand{\trans}{\operatorname{trans}}
	\newcommand{\rest}{|_}
	\newcommand{\bbar}{\overline}
	\newcommand{\UML}{\operatorname{\UU\MM\LL}}
	\renewcommand{\d}{\operatorname{diam}}
	\newcommand{\hs}{{\operatorname{hs}}}
	\newcommand{\EL}{\mathcal{EL}}
	\newcommand{\tsh}[1]{\left\{\kern-.9ex\left\{#1\right\}\kern-.9ex\right\}}
	\newcommand{\Tsh}[2]{\tsh{#2}_{#1}}
	\newcommand{\qeq}{\mathrel{\approx}}
	\newcommand{\Qeq}[1]{\mathrel{\approx_{#1}}}
	\newcommand{\qle}{\lesssim}
	\newcommand{\Qle}[1]{\mathrel{\lesssim_{#1}}}
	\newcommand{\simp}{\operatorname{simp}}
	\newcommand{\vsucc}{\operatorname{succ}}
	\newcommand{\vpred}{\operatorname{pred}}
	\newcommand\fhalf[1]{\overrightarrow {#1}}
	\newcommand\bhalf[1]{\overleftarrow {#1}}
	\newcommand\sleft{_{\text{left}}}
	\newcommand\sright{_{\text{right}}}
	\newcommand\sbtop{_{\text{top}}}
	\newcommand\sbot{_{\text{bot}}}
	\newcommand\sll{_{\mathbf l}}
	\newcommand\srr{_{\mathbf r}}
	\newcommand\geod{\operatorname{\mathbf g}}
	\newcommand\mtorus[1]{\boundary U(#1)}
	\newcommand\A{\mathbf A}
	\newcommand\Aleft[1]{\A\sleft(#1)}
	\newcommand\Aright[1]{\A\sright(#1)}
	\newcommand\Atop[1]{\A\sbtop(#1)}
	\newcommand\Abot[1]{\A\sbot(#1)}
	\newcommand\boundvert{{\boundary_{||}}}
	\newcommand\storus[1]{U(#1)}
	\newcommand\Momega{\omega_M}
	\newcommand\nomega{\omega_\nu}
	\newcommand\twist{\operatorname{tw}}
	\newcommand\SSSS{{\til{\mathcal S}}}
	\newcommand\modl{M_\nu}
	\newcommand\MT{{\mathbb T}}
	\newcommand\dw{{d_{weld}}}
	\newcommand\dt{{d_{te}}}
	\newcommand\Teich{{\operatorname{Teich}}}
	\renewcommand{\Re}{\operatorname{Re}}
	\renewcommand{\Im}{\operatorname{Im}}
	\newcommand{\mc}{\mathcal}
	\newcommand{\ccs}{{\CC(S)}}
	\newcommand{\mtdw}{{(\til{M_T},\dw)}}
	\newcommand{\tmtdw}{{(\til{M_T},\dw)}}
	\newcommand{\tmldw}{{(\til{M_l},\dw)}}
	\newcommand{\mtdt}{{(\til{M_T},\dt)}}
	\newcommand{\tmtdt}{{(\til{M_T},\dt)}}
	\newcommand{\tmldt}{{(\til{M_l},\dt)}}
	\newcommand{\trvw}{{\tr_{vw}}}
	\newcommand{\ttrvw}{{\til{\tr_{vw}}}}
	\newcommand{\but}{{\BU(T)}}
	\newcommand{\ilkv}{{i(lk(v))}}
	\newcommand{\pslc}{{\mathrm{PSL}_2 (\mathbb{C})}}
	\newcommand{\tttt}{{\til{\ttt}}}
	\newcommand{\bcomment}[1]{\textcolor{blue}{#1}}
	\newcommand{\jfm}[1]{\marginpar{#1\quad -jfm}}
	
	\newcommand{\defstyle}[1]{\textbf{#1}}
	\newcommand{\emphstyle}[1]{\emph{#1}}
	
	\title{Property (T) for fiber products}
	
	\author{Mahan Mj}
	\address{School of Mathematics, Tata Institute of Fundamental Research, 1 Homi Bhabha Road, Mumbai 400005, India}
	
	\email{mahan@math.tifr.res.in}
	\email{mahan.mj@gmail.com}
	\urladdr{http://www.math.tifr.res.in/~mahan}
	
	\author{Arghya Mondal}
	\address{Chennai Mathematical Institute, H1, SIPCOT IT Park, Siruseri, Kelambakkam 603103,
		India}
	\email{amondal@cmi.ac.in}
	\email{mondalarghya1990@gmail.com}
	\urladdr{https://sites.google.com/view/arghyamondal/home}

	\thanks{Both authors are  supported by  the Department of Atomic Energy, Government of India, under project no.12-R\&D-TFR-14001.
		MM is supported in part by a Department of Science and Technology JC Bose Fellowship,  and an endowment of the Infosys Foundation via the Chandrasekharan-Infosys Virtual Centre for Random Geometry.}

	\date{\today}
	
	\subjclass[2010]{22D55  (Primary), 20F65, 20F67}
	
	\keywords{Property (T), fiber product of groups, hyperbolically embedded subgroups}

	\def\f{\mathfrak}
	\def\r{\mathbb{R}}
	\def\c{\mathbb{C}}
	\def\h{\mathbb{H}}
	\def\n{\mathbb{N}}
	\def\q{\mathbb{Q}}
	\def\z{\mathbb{Z}}
	\def\G{\Gamma}
	\def\g{\gamma}
	\def\b{\backslash}
	
	\begin{abstract}
		We study when the fiber product of groups with Property (T) has Property (T). 
	\end{abstract}

	\maketitle

	\section{Introduction}
	It is well-known and easy to see that the product of two groups $G_1 \times G_2$ has Kazhdan's property (T) \cite{BdlHV} if and only if both $G_1$ and $G_2$ have Property (T). The aim of this article is to study the following question:
	
	\begin{qn}\label{mainqn}
		Let $G_1$ and $G_2$ have Property (T). Let $q_1: G_1 \to H$ and $q_2: G_2 \to H$ be homomorphisms. When does $G_1 \times_H G_2$ 
		have Property (T)?
	\end{qn}
	
	For simplicity, we will assume all our groups to be discrete in this introductory section, though in some cases more general versions of the results are proved in the main body of the paper. 
	
	It turns out that Question \ref{mainqn} does not have a uniform answer, and we provide both  sufficient condition and counter-examples. We start (Proposition \ref{prop-split}) by noting the sufficient condition  that  one of the maps $q_1$ or $q_2$ admits a section. This allows us to describe $G_1\times_HG_2$ in terms of a short exact sequence and  apply the criterion for extension of a Property (T) group to have Property (T). 
	
	We turn to counterexamples now. Historically the first examples of groups with Property (T) were lattices in higher rank semi-simple algebraic groups. If $G_1$ is such a group then the kernel of the homomorphism $q_1$ (in Question \ref{mainqn}) is a normal subgroup of $G_1$ and hence, by Margulis Normal Subgroup theorem, is either finite or of finite index in $G_1$. Then it is clear that $G_1\times_HG_2$ has Property (T). This and other counterexamples in this paper depend on deep results by other authors deriving Property (T) for groups having other interesting properties. For instance, our first counterexample (Proposition \ref{prop-central}) involves groups with Property (T) whose center is non-compact. Examples of such groups are based on Serre's result that if $G/Z(G)$ has Property (T) and $G/[G,G]$ is finite then $G$ has Property (T). The second counterexample (Proposition \ref{infgen}) involves groups with Property (T) that are not finitely presented. Examples of such groups were given by Margulis, Gromov, and Cornulier. We also make essential use of Shalom's result that any infinitely presented group with Property (T) is the quotient of a finitely presented group with Property (T). The third counterexample (Theorem \ref{thm-hypemb}) involves groups with Property (T) that are hyperbolic, or more generally, admit hyperbolically embedded free or surface groups.  Once we assume that the groups $G_1, G_2$ or $H$ have a certain property, the failure of the fiber product to have Property (T) follows from  applying  known results about groups having such properties. The arguments do not really involve Property (T) deeply. We simply show that the fiber products we construct violate one of  two immediate consequences of Property (T): finite generation and finite abelianization. 
	
	Both the first and third counterexamples are fiber products of two copies of the same homomorphism $q:G\to H$. We have a good handle on this special case since $G\times_HG$ is isomorphic to the semi-direct product $N\rtimes G$ (Lemma \ref{lem-gpth}), where $N$ is the kernel of $q$. In this case Question \ref{mainqn} reduces to the following question:
	\begin{qn}\label{semidirect}
		When does a semi-direct product $N\rtimes G$ have Property (T), given that $G$ has Property (T)? 
	\end{qn}
    Equivalently, when does $(N\rtimes G,N)$ have relative Property (T) given that $G$ has Property (T) (cf. Lemma \ref{extension})? Relative property (T) has been studied (implicitly) since the inception of Property (T) by Kazhdan \cite{kazhdan}. Jolisaint \cite{jolissaint} gave equivalent formulations of relative Property (T), similar to those for Property (T). These were extended by Cornulier \cite{cornulier} to the case of pairs $(G,X)$, where $X$ is only a subset of $G$. Shalom \cite[Theorem 5.5]{shalom} gave a sufficeint condition for $(N\rtimes G,N)$ to have relative Property (T), where $N$ is abelian, in terms of invariant means on the unitary dual $\widehat{N}$ of $N$. Working with the same hypothesis, Cornulier and Tessera \cite{cortes} gave an equivalent formulation in terms of both means and probability measures on $\widehat{N}$. (See also \cite{burger, margulis, shalomIHES}.) Our results are in line with these developments. Let us recall some of the objects involved in characterizing Property (T) or its relative versions. We denote by $\mathcal{P}_1(N)^G$ (resp. $\mathcal{N}(N)^G$) the fixed points of the induced $G$ action on the set $\mathcal{P}_1(N)$ (resp. $\mathcal{N}(N)$) of normalized functions of positive type (resp. functions conditionally of negative type) on $N$. If $N$ is abelian we denote by $\mathcal{M}(\widehat{N})^G$ the fixed points of the induced $G$ action on the set $\mathcal{M}(\widehat{N})$ of regular probability measures on $\widehat{N}$. The Dirac measure at the trivial representation $1$ is denoted by $\delta_1$. All the spaces above are equipped with appropriate topologies (see \S\ref{sec-ns} for details). The following result is a combination of Theorem \ref{fupoty}, Theorem \ref{funety} and Theorem \ref{prob}.
	
	\begin{theorem}\label{allequiv}
		Let $G$ be a group having Property (T), which acts by automorphisms on a group $N$. The following are equivalent:
		\begin{enumerate}
			\item $N\rtimes G$ has Property (T)
			\item Every sequence $\{f_n\}\subset\mathcal{P}_1(N)^G$, that converges uniformly on compact subsets to the constant function $1$, converges uniformly on $N$. 
			\item Every function in $\mathcal{N}(N)^G$ is bounded.
		\end{enumerate}
		In addition, if $N$ is abelian then we also have
		\begin{enumerate}
			\item[(4)] There is no sequence $\{\mu_n\}\subset\mathcal{M}(\widehat{N})^G$ such that $\mu_n\to\delta_1$ and $\mu_n(1)=0$ for all $n\in\n$. 
		\end{enumerate}
	\end{theorem}
	Note that if $G$ is the trivial group, equivalence of the first three items is just the usual characterization of Property (T) (equivalently Property (FH)) in terms of functions of positive type or functions conditionally of negative type. Heuristically, Theorem \ref{allequiv} says that since $G$ already has Property (T), it should be possible to check whether $N\rtimes G$ has Property (T) purely in terms of unitary representations or affine isometric actions of $N$ and the $G$-action on $N$. The key point in the proof is that if a representation of $N\rtimes G$ has a $G$-invariant vector $v$ then the cyclic $N$-subspace generated by $v$ is invariant under the $G$-action. The converse of this observation (Lemma \ref{GNSfupoty}) allows us to extend any $G$-invariant cyclic representation of $N$ to a representation of $N\rtimes G$ on the same vector space. The analogous statement for affine isometric actions is Lemma \ref{GNSfunety} and in the abelian case is Lemma \ref{splrepns}. 
	
	Theorem \ref{allequiv} underscores the importance of the $G$ action on $N$ in addressing Question \ref{semidirect}. If it is known that $\text{Aut}(N)$ has Property (T) (or, in the opposite direction, Haagerup property), then  the image of $G$ in $\text{Aut}(N)$ helps in  deciding if  $N\rtimes G$ has Property (T). If $N$ is finitely generated abelian then, modulo a finite subgroup, the automorphism group of $N$ is $\text{GL}_n(\z)$, where $n$ is the rank of $N$. Then we observe, using a result of Raja \cite{raja}, that $N\rtimes G$ does not have Property (T) if $n\le 2$ and it has Property (T) if the image of $G$ is Zariski dense in $\text{SL}_n(\r)$ (Proposition \ref{fingenab}). In \cite{cms} Chatterji, Witte Morris and Shah prove a relative version of the result due to Serre mentioned before. Assume  $N$ is nilpotent. Then  \cite{cms} deduces relative Property (T) of a pair $(G,N)$, from that of $(G/[N,N],N/[N,N])$ via induction on the length of the lower central series of $N$.  As an immediate implication we note that our last result continues to hold if we replace $N$ by $N/[N,N]$, when $N$ is nilpotent (Proposition \ref{prop-nilp}). 
	
	Here is a section-wise outline of our paper. In \S\ref{sec-prelim} we collect all the known or elementary results related to Property (T) that we will need. In \S\ref{sec-ctreg} we note the sufficient condition Proposition \ref{prop-split} and the counterexamples mentioned above. In \S\ref{sec-ns} we prove all the statements of Theorem \ref{allequiv} in full generality. In \S\ref{sec-special} we discuss Question \ref{semidirect} when $N$ is finitely generated abelian or nilpotent.
	
	\section{Preliminaries}\label{sec-prelim}
	
	We  begin with the observation that the fiber product of two copies of the same map can be written as a semi-direct product. This observation is essentially due to Bass and Lubotzky  \cite[\S6]{bl}, and we include a proof here for completeness.
	%We start with the following basic group-theoretic Lemma.
	
	\begin{lemma}\label{lem-gpth}
		Consider two copies of the same surjective homomorphism $f:G\to H$ with kernel $N$. Then $G\times_HG\cong N\rtimes G$.
	\end{lemma}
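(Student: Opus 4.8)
The plan is to write down an explicit isomorphism and verify its properties directly. Since $N=\ker f$ is normal in $G$, let $G$ act on $N$ by conjugation and form $N\rtimes G$ with multiplication $(n_1,g_1)(n_2,g_2)=(n_1\, g_1 n_2 g_1^{-1},\, g_1 g_2)$. Define $\phi:N\rtimes G\to G\times G$ by $\phi(n,g)=(ng,g)$. The first step is to check that the image lands in the fiber product: $f(ng)=f(n)f(g)=f(g)$ because $n\in N$, so indeed $(ng,g)\in G\times_H G$.

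Next I would verify that $\phi$ is a homomorphism by expanding both sides. On the one hand $\phi((n_1,g_1)(n_2,g_2))=\phi(n_1 g_1 n_2 g_1^{-1},\, g_1 g_2)=(n_1 g_1 n_2 g_2,\, g_1 g_2)$, and on the other hand $\phi(n_1,g_1)\,\phi(n_2,g_2)=(n_1 g_1,g_1)(n_2 g_2,g_2)=(n_1 g_1 n_2 g_2,\, g_1 g_2)$; these agree. This computation is exactly what confirms that the relevant $G$-action is conjugation and that the semidirect-product convention above is the correct one.

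Then I would check bijectivity. For injectivity, $\phi(n,g)=(e,e)$ forces $g=e$ and hence $n=e$. For surjectivity, given $(g_1,g_2)$ with $f(g_1)=f(g_2)$, set $n=g_1 g_2^{-1}$; then $f(n)=e$, so $n\in N$, and $\phi(n,g_2)=(g_1,g_2)$. In fact the inverse is given by the explicit formula $(g_1,g_2)\mapsto(g_1 g_2^{-1},g_2)$, so $\phi$ is an isomorphism of abstract groups.

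Finally, in the topological (locally compact) setting relevant to Property (T), I would observe that $\phi$ and $\phi^{-1}$ are restrictions and corestrictions of continuous maps between $N\times G$ (carrying the product topology, which underlies $N\rtimes G$) and the closed subgroup $G\times_H G\subseteq G\times G$, so $\phi$ is in addition a homeomorphism. I do not expect any genuine obstacle here; the only points requiring care are the bookkeeping in the homomorphism check and fixing the semidirect-product convention so that it matches the conjugation action of $G$ on $N$.
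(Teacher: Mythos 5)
Your proof is correct and is essentially the paper's argument in externalized form: the paper recognizes $G\times_H G$ as an internal semidirect product of $\{1\}\times N$ by the diagonal copy of $G$, while you write down the corresponding explicit isomorphism $\phi(n,g)=(ng,g)$ (placing $N$ in the first factor rather than the second) and verify it directly. The extra remark on continuity in the topological setting is harmless and not needed for the statement as given.
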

	
	\begin{proof}
		It is enough to identify a normal subgroup $N'$ of $G\times_HG$ that is isomorphic to $N$ and a subgroup $G'$ which is isomorphic to $G$, such that $N'\cap G'$ is trivial and $G\times_HG=N'G'$. Take $N':=\{1\}\times N$ and $G':=\{(g_1,g_2)\in G\times_HG:g_1=g_2\}$. Clearly $N'\cap G'=\{(1,1)\}$. Let $(g_1,g_2)\in G\times_HG$. Then $f(g_1)=f(g_2)$. So there exists $n\in N$ such that $ng_1=g_2$. Hence $(g_1,g_2)=(1,n)(g_1,g_1)$. Thus $G\times_HG=N'G'$.
	\end{proof}

	%*******************************************
	
	\subsection{GNS correspondences} In this subsection we will recall the correspondences between unitary representations/affine actions of a group and some function/measure spaces. These results will be needed in \S\ref{sec-ns}. Recall that functions of positive type on a topological group $G$ which take the value 1 at the identity $e$ are called \emph{normalized} and the set of normalized functions of positive type on $G$ is denoted by $\mathcal{P}_1(G)$.
	The first GNS correspondence is the following.
	
	\begin{theorem}\label{orgGNSfupoty}\cite[Theorem C.4.10]{BdlHV}
		There is a one to one correspondence
		\begin{center}
			\begin{tabular}{ccc}
				$\{(\pi,V_\pi,v):(\pi,V_\pi)$ is a unitary  &  &  \\
				cyclic representation of $G$ and & $\leftrightarrow$  & $\mathcal{P}_1(G)$, \\ $v\in V_\pi$ a cyclic unit vector$\}/\sim$ & &
			\end{tabular}
		\end{center}
		where $(\pi,V_\pi,v)\sim (\rho,V_\rho,w)$ if there exists an isometric intertwining operator $T:V_\pi\to V_\rho$ such that $T(v)=w$. The correspondence is given by $(\pi,V_\pi,v)\mapsto (x\mapsto \langle\pi(x)v,v\rangle)$.
	\end{theorem}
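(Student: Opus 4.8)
The statement is the classical Gelfand--Naimark--Segal (GNS) correspondence, and the plan is to prove it in four steps: well-definedness of the assignment $(\pi,V_\pi,v)\mapsto\varphi_v$, where $\varphi_v(x)=\langle\pi(x)v,v\rangle$; compatibility with the equivalence relation $\sim$; surjectivity (the GNS construction proper); and injectivity.

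First I would check that $\varphi_v\in\mathcal{P}_1(G)$. Normalization is immediate, since $\varphi_v(e)=\|v\|^2=1$. To see that $\varphi_v$ is of positive type, note that for any finite families $c_1,\dots,c_n\in\mathbb{C}$ and $x_1,\dots,x_n\in G$ one has
\[
\sum_{i,j}c_i\overline{c_j}\,\varphi_v(x_j^{-1}x_i)=\Bigl\langle\sum_i c_i\pi(x_i)v,\;\sum_j c_j\pi(x_j)v\Bigr\rangle=\Bigl\|\sum_i c_i\pi(x_i)v\Bigr\|^2\ge 0,
\]
and in the topological setting continuity of $\varphi_v$ follows from strong continuity of $\pi$. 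For compatibility with $\sim$: if $T\colon V_\pi\to V_\rho$ is an isometric intertwiner with $Tv=w$, then $\langle\rho(x)w,w\rangle=\langle T\pi(x)v,Tv\rangle=\langle\pi(x)v,v\rangle$, so equivalent triples yield the same function of positive type and the assignment descends to $\sim$-classes.

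For surjectivity, given $\varphi\in\mathcal{P}_1(G)$ I would run the GNS construction: on the space of finitely supported functions $f\colon G\to\mathbb{C}$ (or compactly supported continuous functions, with sums replaced by Haar integrals, in the locally compact case) put the positive semidefinite sesquilinear form $\langle f,g\rangle_\varphi=\sum_{x,y}f(x)\overline{g(y)}\,\varphi(y^{-1}x)$, pass to the quotient by its null space, and complete, obtaining a Hilbert space $V_\varphi$. Left translation preserves this form and descends to a unitary representation $\pi$ of $G$ on $V_\varphi$; the image $v$ of $\delta_e$ (of an approximate identity in the non-discrete case) satisfies $\|v\|^2=\varphi(e)=1$ and $\langle\pi(x)v,v\rangle=\varphi(x)$, and $\pi(G)v$ has dense span, so $v$ is a cyclic unit vector with $\varphi_v=\varphi$. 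For injectivity, if $(\pi,V_\pi,v)$ and $(\rho,V_\rho,w)$ both map to $\varphi$, define $T$ on the dense subspace $\operatorname{span}\pi(G)v$ by $T\bigl(\sum_i c_i\pi(x_i)v\bigr)=\sum_i c_i\rho(x_i)w$; the identity
\[
\Bigl\|\sum_i c_i\rho(x_i)w\Bigr\|^2=\sum_{i,j}c_i\overline{c_j}\,\varphi(x_j^{-1}x_i)=\Bigl\|\sum_i c_i\pi(x_i)v\Bigr\|^2
\]
shows $T$ is well defined and isometric; it extends to an isometry $V_\pi\to V_\rho$ which is surjective because $w$ is cyclic, it intertwines $\pi$ and $\rho$ by construction, and $Tv=w$. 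Hence the two triples are equivalent.

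The algebra in all four steps is routine; the only real obstacle lies in the topology when $G$ is not discrete. One must verify that the GNS representation attached to a \emph{continuous} function of positive type is strongly continuous, and that in the absence of a unit vector $\delta_e$ an approximate identity still produces a genuine cyclic unit vector realizing the prescribed expectation value — points that are vacuous for discrete $G$ but require standard, if nontrivial, care in general.
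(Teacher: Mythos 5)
Your proposal is correct and is exactly the standard GNS argument that the paper relies on: the statement is quoted from \cite[Theorem C.4.10]{BdlHV} without proof, and your four steps (positive-definiteness of the matrix coefficient, invariance under $\sim$, the GNS construction for surjectivity, and the isometric intertwiner for injectivity) reproduce that reference's proof. The only remark worth adding is that the approximate-identity detour you flag for non-discrete $G$ can be avoided entirely by building the GNS space from finitely supported functions on $G$ regarded as a set, with strong continuity of the resulting representation then following from continuity of $\varphi$ via $\|\pi(x)\xi-\xi\|^2$ estimates on the dense span.
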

	
	The second correspondence is between affine isometric actions of $G$ and functions conditionally of negative type on $G$. In \cite{BdlHV} this GNS correspondence is not stated in the form below, but it can be easily derived from the relevant results there. We will supply the extra detail. First, in analogy to cyclic representations, we define a \emph{cyclic affine action} to be an affine action $(\alpha,\mathcal{H})$ for which the span of $\{\alpha(x)(0):x\in G\}$ is dense in $\mathcal{H}$. Functions conditionally of negative type on $G$ which take the value $0$ at the identity $e$ will be called \emph{normalized}. Let the space of such functions be denoted by $\mathcal{N}_0(G)$.
	
	\begin{theorem}\label{orgGNSfunety}\cite[Proposition 2.10.2]{BdlHV}
		There is a one to one correspondence
		\begin{center}
			\begin{tabular}{ccc}
				$\{(\alpha,\mathcal{H}):(\alpha,\mathcal{H})$ is a cyclic affine isometric action$\}/\sim$ & $\leftrightarrow$ & $\mathcal{N}_0(G)$, \\
				%$ & &
			\end{tabular}
		\end{center}
		where $(\alpha,\mathcal{H})\sim (\alpha_1,\mathcal{H}_1)$ if there exists a $G$-equivariant orthogonal map $T:\mathcal{H}\to \mathcal{H}_1$. The correspondence is given by $(\alpha,\mathcal{H})\mapsto (x\mapsto\|\alpha(x)(0)\|^2)$.
	\end{theorem}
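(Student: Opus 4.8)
The plan is to carry out the GNS-type construction for functions conditionally of negative type, in exact parallel with Theorem \ref{orgGNSfupoty}, and to verify that it is compatible both with the notion of a \emph{cyclic} affine action and with the equivalence relation stated in the theorem. Throughout I use the standard dictionary (see \cite{BdlHV}) between affine isometric actions $(\alpha,\mathcal{H})$ and pairs $(\pi,b)$, where $\pi$ is the linear part, an orthogonal representation of $G$ on the real Hilbert space $\mathcal{H}$, and $b(x)=\alpha(x)(0)$ is a $1$-cocycle, $b(xy)=\pi(x)b(y)+b(x)$. Under this dictionary $\|\alpha(x)(0)\|^2=\|b(x)\|^2$, and $(\alpha,\mathcal{H})$ is cyclic exactly when $\overline{\mathrm{span}}\{b(x):x\in G\}=\mathcal{H}$.

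First I would check that $(\alpha,\mathcal{H})\mapsto(x\mapsto\|\alpha(x)(0)\|^2)$ lands in $\mathcal{N}_0(G)$: that $\psi(x):=\|b(x)\|^2$ is conditionally of negative type with $\psi(e)=0$ is \cite[Proposition 2.10.2]{BdlHV}. It descends to equivalence classes because if $T:\mathcal{H}\to\mathcal{H}_1$ is a $G$-equivariant orthogonal map, then $T$ is linear, so $T(0)=0$ and $G$-equivariance gives $\alpha_1(x)(0)=\alpha_1(x)(T(0))=T(\alpha(x)(0))$, whence $\|\alpha_1(x)(0)\|^2=\|\alpha(x)(0)\|^2$ since $T$ is orthogonal.

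For surjectivity, given $\psi\in\mathcal{N}_0(G)$ let $V$ be the real vector space of finitely supported $\xi:G\to\reals$ with $\sum_x\xi(x)=0$, equipped with the bilinear form $\langle\xi,\eta\rangle=-\tfrac12\sum_{x,y}\xi(x)\eta(y)\,\psi(x^{-1}y)$. This form is symmetric (using $\psi(g^{-1})=\psi(g)$) and, because $\psi$ is conditionally of negative type, positive semidefinite on $V$; quotienting by its kernel and completing gives a real Hilbert space $\mathcal{H}_\psi$. Left translation $\pi_0(g)\xi(x)=\xi(g^{-1}x)$ preserves $V$ and the form, so it descends to an orthogonal representation $\pi$ of $G$ on $\mathcal{H}_\psi$. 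Setting $b(g)=[\delta_g-\delta_e]$, a direct computation using $\psi(e)=0$ and $\psi(g^{-1})=\psi(g)$ gives $\|b(g)\|^2=\psi(g)$, and $\pi(g)b(h)+b(g)=[\delta_{gh}-\delta_g]+[\delta_g-\delta_e]=b(gh)$, so $b$ is a cocycle. The associated affine isometric action $\alpha_\psi$ therefore maps to $\psi$, and it is cyclic because $\{\delta_g-\delta_e:g\in G\}$ spans $V$, so $\overline{\mathrm{span}}\{b(g)\}=\mathcal{H}_\psi$.

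For injectivity, suppose cyclic actions with data $(\pi,b)$ and $(\pi_1,b_1)$ both map to $\psi$. The cocycle identity gives $b(x^{-1}y)=\pi(x^{-1})\big(b(y)-b(x)\big)$, hence $\psi(x^{-1}y)=\|b(y)-b(x)\|^2=\|b(x)\|^2+\|b(y)\|^2-2\langle b(x),b(y)\rangle$, so that $\langle b(x),b(y)\rangle=\tfrac12\big(\psi(x)+\psi(y)-\psi(x^{-1}y)\big)$ depends only on $\psi$, and the same formula holds for $b_1$. Thus $b(x)\mapsto b_1(x)$ extends to a linear isometry between the dense subspaces $\mathrm{span}\{b(x)\}\subseteq\mathcal{H}$ and $\mathrm{span}\{b_1(x)\}\subseteq\mathcal{H}_1$, hence to an orthogonal isomorphism $T:\mathcal{H}\to\mathcal{H}_1$ with $Tb(x)=b_1(x)$; and since $\pi(x)b(y)=b(xy)-b(x)$, we get $T\pi(x)b(y)=b_1(xy)-b_1(x)=\pi_1(x)b_1(y)=\pi_1(x)Tb(y)$ on the dense set $\{b(y):y\in G\}$, so $T$ is $G$-equivariant and $(\alpha,\mathcal{H})\sim(\alpha_1,\mathcal{H}_1)$. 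The individual steps are routine; the only points needing care — and precisely the "extra detail" left implicit in \cite{BdlHV} — are that the forward map respects the stated equivalence (linearity of $T$ forces $T(0)=0$), that the GNS construction automatically produces a \emph{cyclic} action, and the bookkeeping showing that $\langle b(x),b(y)\rangle$ is determined by $\psi$ alone, which is what drives injectivity.
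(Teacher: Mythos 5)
Your proof is correct and follows essentially the same route as the paper: the paper's supplied argument is exactly your injectivity step, except that where you derive $\langle b(x),b(y)\rangle=\tfrac12(\psi(x)+\psi(y)-\psi(x^{-1}y))$ by hand and extend $b(x)\mapsto b_1(x)$ to an orthogonal equivariant isomorphism, the paper invokes the uniqueness clause of the GNS construction for kernels conditionally of negative type \cite[Theorem C.2.3]{BdlHV} to produce the affine isometry $A$ and then observes $A(0)=0$. Your explicit surjectivity construction and the check that the map descends to equivalence classes are the parts the paper delegates to \cite[Proposition 2.10.2]{BdlHV}, and both are carried out correctly.
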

	
	\noindent\textit{Proof of well-definedness of the correspondence:} Let $(\alpha,\mathcal{H})$ and $(\alpha_1,\mathcal{H}_1)$ be two cyclic affine isometric actions of $G$ such that $\|\alpha(x)(0)\|^2=\|\alpha_1(x)(0)\|^2=:\psi(x)$, for all $x\in G$. Consider the kernel conditionally of negative type $\Psi:G\times G\to\r$ associated to $\psi$, that is, $\Psi(x,y)=\psi(y^{-1}x)$. Using the fact that $\alpha$ is a group homomorphism and that each element in its image is an affine action, we get  
	\begin{equation*}
		\Psi(x,y)=\|\alpha(y^{-1}x)(0)\|^2=\|\alpha(y^{-1}x)(0)-\alpha(e)(0)\|^2=\|\alpha(x)(0)-\alpha(y)(0)\|^2.
	\end{equation*}
	Similarly $\Psi(x,y)=\|\alpha_1(x)(0)-\alpha_1(y)(0)\|^2$. We are also given that the spans of $\{\alpha(x)(0)-\alpha(e)(0):x\in G\}$ and $\{\alpha_1(x)(0)-\alpha_1(e)(0):x\in G\}$ are dense in $\mathcal{H}$. Uniqueness of GNS construction for kernels conditionally of negative type \cite[Theorem C.2.3]{BdlHV} tells us that there exists a unique affine isometry $A:\mathcal{H}\to\mathcal{H}_1$, such that $\alpha_1(x)(0)=A(\alpha(x)(0))$ for all $x\in G$. In particular $A(0)=A(\alpha(e)(0))=\alpha_1(e)(0)=0$. An affine map that takes $0$ to $0$ is an orthogonal operator. Hence $A$ is an orthogonal operator. Also clearly $A$ is $G$-equivariant on the span of $\{\alpha(x)(0):x\in G\}$, and the latter is dense in $\mathcal{H}$. Hence by continuity $A$ is $G$-equivariant on $\mathcal{H}$. \qed
	
	Finally we have that if $G$ is abelian then  cyclic representations also correspond to regular probability measures on the unitary dual $\widehat{G}$. This is well known to experts, but  we supply a proof for completeness.
	
	\begin{lemma}\label{directint}
		Let $G$ be a locally compact abelian group. Let $\widehat{G}$ be its unitary dual and let $\mathcal{M}(\widehat{G})$ be the space of regular probability measures on $\widehat{G}$. There is a one to one correspondence
		\begin{center}
			\begin{tabular}{ccc}
				$\{(\pi,V_\pi,v):(\pi,V_\pi)$ is a unitary  & & \\
				cyclic representation of $G$ and & $\leftrightarrow$ & $\mathcal{M}(\widehat{G})$, \\
				$v\in V_\pi$ a cyclic unit vector$\}/\sim$ & &
			\end{tabular}
		\end{center}
		where $(\pi,V_\pi,v)\sim (\rho,V_\rho,w)$ if there exists an isometric intertwining operator $T:V_\pi\to V_\rho$ such that $T(v)=w$. 
	\end{lemma}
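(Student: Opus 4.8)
The plan is to factor the asserted bijection as the composite of the GNS correspondence of Theorem \ref{orgGNSfupoty} with the classical Bochner correspondence for locally compact abelian groups; both are standard and need not be reproved. Since $G$ is abelian, every irreducible unitary representation of $G$ is one-dimensional, so $\widehat{G}$ is precisely the Pontryagin dual of $G$ -- again a locally compact abelian group -- and, by Pontryagin duality, the evaluations $\chi\mapsto\chi(x)$, $x\in G$, are exactly the characters of $\widehat{G}$. Bochner's theorem identifies the cone of continuous functions of positive type on $G$ with the cone of finite positive regular Borel measures on $\widehat{G}$ via the Fourier transform $\mu\mapsto\widehat{\mu}$, $\widehat{\mu}(x)=\int_{\widehat{G}}\chi(x)\,d\mu(\chi)$; since $\widehat{\mu}(e)=\mu(\widehat{G})$, this restricts to a bijection from $\mathcal{M}(\widehat{G})$ onto $\mathcal{P}_1(G)$. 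Composing it with the bijection $\mathcal{P}_1(G)\leftrightarrow\{(\pi,V_\pi,v)\}/\sim$ of Theorem \ref{orgGNSfupoty} produces the map claimed in the statement.

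It then remains to record that the data and the equivalence relation match up. Spelling out the composite, a measure $\mu$ corresponds to the class of the triple $(\pi_\mu,L^2(\widehat{G},\mu),\mathbf{1})$, where $\pi_\mu(x)f(\chi)=\chi(x)f(\chi)$ and $\mathbf{1}$ is the constant function $1$: here $\mathbf{1}$ is a unit vector precisely because $\mu$ is a probability measure, $\pi_\mu$ is a strongly continuous unitary representation (dominated convergence), and $\mathbf{1}$ is cyclic because any $f\in L^2(\widehat{G},\mu)$ orthogonal to every $\pi_\mu(x)\mathbf{1}$ satisfies $\int_{\widehat{G}}\chi(x)\,\overline{f(\chi)}\,d\mu(\chi)=0$ for all $x\in G$, i.e.\ the Fourier transform of the finite measure $\overline{f}\,d\mu$ vanishes, forcing $f=0$ in $L^2(\widehat{G},\mu)$. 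In the reverse direction the measure attached to $(\pi,V_\pi,v)$ may be exhibited concretely as $B\mapsto\langle E(B)v,v\rangle$, where $E$ is the projection-valued measure on $\widehat{G}$ supplied by the spectral theorem (SNAG theorem) for the commuting family $\pi(G)$; this is consistent with Bochner since $\langle\pi(x)v,v\rangle=\int_{\widehat{G}}\chi(x)\,d\langle E(\chi)v,v\rangle$.

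Finally, by Theorem \ref{orgGNSfupoty} two triples are equivalent -- there is an isometric intertwiner carrying $v$ to $w$ -- if and only if $\langle\pi(\cdot)v,v\rangle=\langle\rho(\cdot)w,w\rangle$ as elements of $\mathcal{P}_1(G)$, hence, by injectivity of the Fourier transform in Bochner's theorem, if and only if they determine the same probability measure on $\widehat{G}$; so the composite is well defined and injective, and it is surjective because both maps being composed are onto. I do not expect a genuine obstacle here: the content is entirely the assembly of Theorem \ref{orgGNSfupoty} and Bochner's theorem together with the check that the value-at-$e$ normalization, the unit-vector normalization and the strong-continuity requirement are mutually compatible. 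The one point that warrants care is invoking Bochner's theorem (equivalently the SNAG spectral theorem) in the full locally compact -- not merely discrete -- setting and verifying that the measures it produces are genuinely regular, which is where local compactness of $\widehat{G}$ (and, in the later applications of this lemma, second countability) is used.
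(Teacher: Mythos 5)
Your proof is correct, but it is organized differently from the paper's. You factor the bijection as the composite of the GNS correspondence of Theorem \ref{orgGNSfupoty} with Bochner's theorem, so that existence and injectivity are inherited from those two classical results, and uniqueness of the measure comes from injectivity of the Fourier--Stieltjes transform. The paper never passes through $\mathcal{P}_1(G)$ or Bochner: it exhibits the model triple $(\rho,L^2_\mu(\widehat{G}),1)$ directly, invokes the direct-integral (spectral) decomposition of unitary representations of locally compact abelian groups --- the SNAG theorem, cited as Folland's Theorem 7.28 --- to show that every cyclic triple is equivalent to such a model, and proves uniqueness of $\mu$ by observing that an isometric intertwiner $L^2_{\mu_1}(\widehat{G})\to L^2_{\mu_2}(\widehat{G})$ fixing $1$ must restrict to the identity on $C_0(\widehat{G})$, whence $\mu_1=\mu_2$ by the Riesz representation theorem. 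The two arguments rest on the same underlying spectral fact, so the difference is one of packaging: your route makes the analogy with Lemma \ref{GNSfupoty} transparent and gets the equivalence-relation bookkeeping for free from Theorem \ref{orgGNSfupoty}, whereas the paper's direct construction of the $L^2_\mu$ model and of the spectral measure $P_{v,v}$ (which you also identify via the projection-valued measure $E$) is precisely what gets reused in Corollary \ref{splrepns}. Your cyclicity argument for $\mathbf{1}$ (orthogonality forces the Fourier transform of the finite complex measure $\overline{f}\,d\mu$ to vanish, hence $f=0$) is a small variant of the paper's density argument ($\rho(L^1(G))1$ is dense in $C_0(\widehat{G})$, which is dense in $L^2_\mu(\widehat{G})$ by regularity); both are sound, and your closing caveat about regularity and the non-discrete setting is exactly the right point to watch.
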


	\begin{proof}
		Let us describe the map which gives the correspondence. First note that any regular measure $\mu$ on $\widehat{N}$ gives us a representation $(\rho,L^2_\mu(\widehat{N}))$ of $N$, where $\rho(x)(f)(\chi):=\chi(x)f(\chi)$. We claim that the constant function $1$ is a cyclic vector. It is enough to check that $\rho(L^1(N))1$ is dense in $L^2_\mu(\widehat{N})$. But $\rho(L^1(N))1$ is the image of $L^1(N)$ under the Fourier transform and hence is dense in $C_0(\widehat{N})$, which is itself dense in $L^2_\mu(\widehat{N})$, since $\mu$ is finite regular. The direct integral decomposition of unitary representations of locally compact abelian groups \cite[Theorem 7.28]{folland} implies that any equivalence class in the left hand side of the above correspondence contains a pair $(\rho,L^2_\mu(\widehat{N}),1)$, for some regular probability measure $\mu$. This measure must be unique, since if there is a intertwining map $L^2_{\mu_1}(\widehat{N})\to L^2_{\mu_2}(\widehat{N})$ sending $1$ to $1$,  it must be the identity on $C_0(\widehat{N})$. Hence, $\mu_1=\mu_2$ by the Riesz Representation Theorem. The correspondence is given by the map which sends the equivalence class of $(\pi,V_\pi,v)$ to $\mu$, where $(\rho,L^2_\mu(\widehat{N}),1)$ is the unique such representation in that class. This map is a bijection by the above discussion. 
	\end{proof}
	\subsection{Results about Property (T)} All the results here, except the first one, will be required in \S\ref{sec-ns}. The first result is about when an extension of a group with Property (T) has Property (T).
	\begin{lemma}\label{extension}\cite[Remark 1.7.7]{BdlHV}
		Given a short exact sequence of groups
		\begin{equation*}
			1\to N\to G\to Q\to 1,
		\end{equation*}
		$G$ has Property (T) if and only if $Q$ has Property (T) and $(G,N)$ has relative Property (T). 
	\end{lemma}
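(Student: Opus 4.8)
The plan is to prove the two implications separately, using only standard facts about weak containment of unitary representations (written $\prec$): throughout, a unitary representation $\pi$ of a group has almost invariant vectors iff it weakly contains the trivial representation $1$, and a group has Property (T) iff every representation weakly containing $1$ has a nonzero invariant vector.

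For the \emph{only if} direction, assume $G$ has Property (T). If $\rho$ is a unitary representation of $Q$ with $1_Q\prec\rho$, then pulling back along the quotient map $q\colon G\to Q$ gives $1_G\prec\rho\circ q$ (the diagonal matrix coefficients of $\rho\circ q$ are pullbacks of those of $\rho$, and compact subsets of $Q$ are images of compact subsets of $G$); since $G$ has Property (T), $\rho\circ q$ has a nonzero $G$-invariant vector, which is $Q$-invariant because $q$ is onto, so $Q$ has Property (T). Also, a Kazhdan pair $(C,\ep)$ for $G$ is automatically a relative Kazhdan pair for $(G,N)$, since any $(C,\ep)$-invariant unit vector produces a nonzero $G$-invariant vector, which is in particular $N$-invariant. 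Hence $(G,N)$ has relative Property (T).

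For the \emph{if} direction, assume $Q$ has Property (T) and $(G,N)$ has relative Property (T), and let $(\pi,\HH)$ be a unitary representation of $G$ with $1_G\prec\pi$; I must produce a nonzero $G$-invariant vector. As $N$ is normal in $G$, the closed subspace $\HH^N$ of $N$-fixed vectors is $G$-invariant, so $\HH=\HH^N\oplus(\HH^N)^\perp$ is a decomposition into subrepresentations, and the subrepresentation on $(\HH^N)^\perp$ has no nonzero $N$-invariant vector. By relative Property (T) of $(G,N)$ it therefore does not weakly contain $1_G$. A short argument with almost invariant vectors then forces $1_G\prec\pi|_{\HH^N}$: if $\xi_i=a_i\oplus b_i$ are almost invariant unit vectors for $\pi$, then $\|\pi(g)a_i-a_i\|$ and $\|\pi(g)b_i-b_i\|$ tend to $0$ uniformly on compacta; $\|b_i\|$ cannot tend to $1$ (that would exhibit almost invariant vectors in $(\HH^N)^\perp$), so $\liminf\|a_i\|>0$ along a subnet and the normalized $a_i$ witness $1_G\prec\pi|_{\HH^N}$.

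Finally, $N$ acts trivially on $\HH^N$, so $\pi|_{\HH^N}$ descends to a unitary representation $\rho$ of $Q=G/N$, and $1_G\prec\pi|_{\HH^N}$ yields $1_Q\prec\rho$ (once more transporting weak containment through $q$, using that $q$ sends compact sets to compact sets and lifts them back). Since $Q$ has Property (T), $\rho$ has a nonzero $Q$-invariant vector, i.e.\ a nonzero $\eta\in\HH^N$ with $\pi(g)\eta=\eta$ for all $g\in G$; thus $\pi$ has a nonzero invariant vector and $G$ has Property (T). The only points requiring any care are the passage of weak containment across the quotient map $q$ and the elementary dichotomy in the previous paragraph; neither is a genuine obstacle, which is consistent with the statement being classical and simply quoted from \cite{BdlHV}.
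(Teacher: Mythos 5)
Your argument is correct and is the standard textbook proof of this classical fact; the paper itself offers no proof, simply citing \cite[Remark 1.7.7]{BdlHV}. Both directions are handled properly: the decomposition $\HH=\HH^N\oplus(\HH^N)^\perp$ into $G$-subrepresentations (using normality of $N$), the dichotomy forcing almost invariant vectors into $\HH^N$, and the descent to $Q$ are exactly the expected steps, and the only slightly delicate points (transporting weak containment across the quotient map and normalizing the components $a_i$) are correctly flagged and correctly resolved.
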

	The next result states that in a group having Property (T), one can choose almost invariant vectors as close to invariant vectors as one wishes. 
	
	\begin{prop}\label{alinv}\cite[Chapitre 1, Proposition 16]{dlHV}
		Let $K$ be a compact generating set of a group $G$ having Property (T). Given any $0<\delta\le 2$, there exists $\epsilon>0$, such that in any unitary representation $(\pi,V_\pi)$ of $G$, if $v$ is a $(K,\epsilon)$-invariant unit vector, then there exists an invariant unit vector $w\in V_\pi$, such that $\|v-w\|<\delta$.
	\end{prop}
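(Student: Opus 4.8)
The plan is to reduce to a single standard, slightly non-elementary input---that Property (T) yields a Kazhdan pair with \emph{any} prescribed compact generating set---and then argue by splitting an arbitrary unitary representation along its subspace of invariant vectors. Concretely, since $G$ has Property (T) and $K$ generates $G$, fix a Kazhdan constant $\kappa=\kappa(G,K)>0$ with the property that in any unitary representation $(\sigma,W)$ having \emph{no} nonzero $\sigma(G)$-invariant vector, every unit vector $u$ satisfies $\sup_{g\in K}\|\sigma(g)u-u\|\ge\kappa$; equivalently, a $(K,\kappa)$-invariant unit vector forces a nonzero invariant vector. This is the only place where the hypotheses are genuinely used, and it is precisely why $K$ is required to generate $G$.

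Now let $(\pi,V_\pi)$ be an arbitrary unitary representation and let $v$ be a $(K,\epsilon)$-invariant unit vector, with $\epsilon>0$ to be fixed at the end. Write $V_\pi=V_\pi^{\pi(G)}\oplus W$, the orthogonal decomposition into the invariant subspace and its complement; both summands are $\pi$-invariant and $W$ contains no nonzero invariant vector. Write $v=v_0+v'$ accordingly. Since $\pi(g)v_0=v_0$, we have $\pi(g)v-v=\pi(g)v'-v'$ for all $g\in G$, so $\sup_{g\in K}\|\pi(g)v'-v'\|<\epsilon$. If $v'\ne0$, applying the Kazhdan constant in $W$ to the unit vector $v'/\|v'\|$ gives $\kappa\|v'\|\le\sup_{g\in K}\|\pi(g)v'-v'\|<\epsilon$, hence $\|v'\|<\epsilon/\kappa$ (which also holds trivially if $v'=0$). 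Provided $\epsilon\le\kappa$, it follows that $\|v_0\|=\sqrt{1-\|v'\|^2}>\sqrt{1-\epsilon^2/\kappa^2}>0$, so $w:=v_0/\|v_0\|$ is an invariant unit vector. Since $v'\perp v_0$, one computes $\langle v,w\rangle=\|v_0\|$, whence $\|v-w\|^2=2-2\|v_0\|<2-2\sqrt{1-\epsilon^2/\kappa^2}$.

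It remains to choose $\epsilon$. Taking $\epsilon:=\kappa\delta/2$ (which satisfies $\epsilon\le\kappa$ because $\delta\le2$) gives $\epsilon^2/\kappa^2=\delta^2/4\le1$, and the elementary inequality $\sqrt{1-x}\ge1-x$ on $[0,1]$ yields $\|v-w\|^2<2-2\bigl(1-\delta^2/4\bigr)=\delta^2/2<\delta^2$, i.e.\ $\|v-w\|<\delta$; moreover this $\epsilon$ was chosen independently of the representation, as required. The only real difficulty is the conceptual one already flagged: one must invoke that Property (T) produces a Kazhdan pair with the given generating set $K$, rather than merely with some auxiliary compact subset; granting that, everything else is a one-line orthogonal-projection estimate. (An alternative route, giving a slightly less quantitative statement, is to argue by contradiction: assume the conclusion fails for some $\delta$, take representations with $(K,1/n)$-invariant unit vectors $v_n$ none of which is within $\delta$ of an invariant unit vector, and derive a contradiction from the same projection argument once $1/n<\kappa$.)
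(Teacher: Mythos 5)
Your argument is correct. Note that the paper does not prove this proposition at all: it is quoted verbatim from de la Harpe--Valette \cite[Chapitre 1, Proposition 16]{dlHV}, so there is no in-paper proof to compare against. What you have written is the standard proof of that cited result: decompose $V_\pi=V_\pi^{\pi(G)}\oplus W$ orthogonally, observe that $\pi(g)v-v=\pi(g)v'-v'$ kills the invariant component, bound $\|v'\|$ by $\epsilon/\kappa$ using the Kazhdan constant on $W$ (which has no nonzero invariant vectors), and normalize the projection $v_0$. Your quantitative bookkeeping checks out: with $\epsilon=\kappa\delta/2$ one gets $\|v-w\|^2=2-2\|v_0\|<2-2\sqrt{1-\delta^2/4}\le\delta^2/2$. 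You are also right to isolate the one genuinely non-elementary input, namely that Property (T) furnishes a Kazhdan constant for the \emph{given} compact generating set $K$ rather than for some auxiliary compact set; this is \cite[Proposition 1.3.2]{BdlHV} and does require that $K$ generate $G$, exactly as you say. No gaps.
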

	
	The next two results are a consequence of  \cite[Lemma 2.2.7]{BdlHV} which says that among all closed balls containing a non-empty bounded subset of a real or complex Hilbert space, there exists a unique one with minimal radius. The center of this unique minimal closed ball is the \emph{center of $X$}. The significance of this lemma for isometric group actions is that if $X$ is a $G$-invariant bounded subset of a Hilbert space then the center of $X$ is a $G$-fixed point. When the action is via a unitary representation we have the following consequence.
	
	\begin{lemma}\label{bddorbitur}\cite[Chapitre 3, Corollaire 11]{dlHV}
		Let $(\pi,V_\pi)$ be a unitary representation of a group $G$. Let $v\in V_\pi$ be such that $\emph{Re}\langle\pi(g)v,v\rangle\ge\epsilon$, for some $\epsilon>0$ and all $g\in G$. Then $\pi(G)$ has a non-zero invariant vector.   
	\end{lemma}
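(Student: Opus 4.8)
The plan is to apply the cited fact about minimal enclosing balls to the $\pi(G)$-orbit of $v$, and then to rule out the origin as the center of that ball using the positivity hypothesis. First I would note that taking $g=e$ in the hypothesis gives $\|v\|^2=\Re\langle v,v\rangle\ge\epsilon>0$, so in particular $v\neq 0$. Consider the orbit $X=\{\pi(g)v:g\in G\}$. Since $\pi$ is unitary, every point of $X$ has norm $\|v\|$, so $X$ is a non-empty bounded subset of $V_\pi$; it is also $G$-invariant, because $\pi(h)X=\{\pi(hg)v:g\in G\}=X$ for every $h\in G$.

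By \cite[Lemma 2.2.7]{BdlHV} there is a unique closed ball $B$ of minimal radius $r$ containing $X$; write $w$ for its center. For each $h\in G$ the image $\pi(h)B$ is again a closed ball of radius $r$ (as $\pi(h)$ is a surjective isometry) containing $\pi(h)X=X$, so by the uniqueness clause $\pi(h)B=B$, and therefore $\pi(h)w=w$. Thus $w$ is a $G$-fixed vector, and it remains only to verify that $w\neq 0$ — this is exactly the point where the hypothesis $\Re\langle\pi(g)v,v\rangle\ge\epsilon$ (rather than merely $\ge 0$) is used.

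Suppose for contradiction that $w=0$. Then, since all points of $X$ have norm $\|v\|$, the minimal radius is $r=\sup_{g\in G}\|\pi(g)v\|=\|v\|$. Now estimate, for $t>0$,
\[
\|\pi(g)v-tv\|^2=\|\pi(g)v\|^2-2t\,\Re\langle\pi(g)v,v\rangle+t^2\|v\|^2\le\|v\|^2-2t\epsilon+t^2\|v\|^2 .
\]
Choosing $t>0$ small enough that $2t\epsilon-t^2\|v\|^2>0$ (possible since $\epsilon>0$ and $v\neq 0$), the right-hand side is strictly less than $\|v\|^2=r^2$ uniformly in $g$, so the closed ball centered at $tv$ of radius $\bigl(\|v\|^2-2t\epsilon+t^2\|v\|^2\bigr)^{1/2}<r$ contains $X$. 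This contradicts the minimality of $r$. Hence $w\neq 0$, and $w$ is the desired non-zero $\pi(G)$-invariant vector.

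The argument is short, and I do not expect a genuine obstacle: the only step needing a little care is the passage from uniqueness of the minimal ball to $G$-invariance of its center (which is precisely the remark made just before the statement), and after that the contradiction is a one-line computation with the half-angle-type expansion of $\|\pi(g)v-tv\|^2$.
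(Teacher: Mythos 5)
Your proof is correct, and it follows exactly the route the paper indicates in the paragraph preceding the lemma (the paper itself only cites \cite[Chapitre 3, Corollaire 11]{dlHV} and does not reprove it): apply \cite[Lemma 2.2.7]{BdlHV} to the bounded $G$-invariant orbit $\pi(G)v$, conclude the circumcenter $w$ is fixed by uniqueness of the minimal ball, and use $\Re\langle\pi(g)v,v\rangle\ge\epsilon$ to rule out $w=0$ by exhibiting a strictly smaller enclosing ball centered at $tv$. No gaps.
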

	
	When the action is via affine isometries on a real Hilbert space we have the following consequence.
	
	\begin{lemma}\label{bddorbitai}\cite[Proposition 2.2.9]{BdlHV}
		Let $(\alpha,\mathcal{H})$ be an affine isometric action of a group $G$. Let $b$ be the corresponding cocycle. Then $\alpha$ has a fixed vector if and only if $b$ is bounded. 
	\end{lemma}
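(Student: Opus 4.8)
The plan is to reduce everything to the center-of-a-bounded-set construction recalled in the paragraph preceding the statement. First I would fix the standard decomposition of the affine isometric action into its linear part and a cocycle: write $\alpha(g)v=\pi(g)v+b(g)$, where $\pi$ is the orthogonal representation of $G$ on $\mathcal{H}$ underlying $\alpha$ and $b:G\to\mathcal{H}$ is the associated $1$-cocycle, so that $b(e)=0$ and $b(gh)=b(g)+\pi(g)b(h)$. The key observation to record is that the $\alpha$-orbit of the origin is exactly the image of the cocycle: $\{\alpha(g)(0):g\in G\}=\{b(g):g\in G\}=:X$. So the statement amounts to saying that $\alpha$ fixes a point if and only if the orbit $X$ is bounded.

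For the ``only if'' direction, suppose $v_0$ is a fixed vector. Since each $\alpha(g)$ is an isometry fixing $v_0$, for every $g\in G$ one has $\|b(g)\|=\|\alpha(g)(0)-v_0\|=\|\alpha(g)(0)-\alpha(g)(v_0)\|=\|0-v_0\|=\|v_0\|$. Hence $b$ takes values on the sphere of radius $\|v_0\|$ about the origin, so $b$ is bounded. This step is immediate.

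For the ``if'' direction, assume $b$ is bounded, so $X$ is a nonempty bounded subset of $\mathcal{H}$. The point to check is that $X$ is $G$-invariant under $\alpha$: for $h\in G$, $\alpha(h)(X)=\{\alpha(h)\alpha(g)(0):g\in G\}=\{\alpha(hg)(0):g\in G\}=X$, where the last equality uses that $g\mapsto hg$ is a bijection of $G$ (this is where one genuinely needs $\alpha$ to be an action of a \emph{group}, so each $\alpha(h)$ is invertible — the only subtlety in the argument, and the step I would be most careful about). By \cite[Lemma 2.2.7]{BdlHV}, $X$ has a unique circumcenter $c\in\mathcal{H}$, and since the minimal enclosing ball of a $G$-invariant set is again $G$-invariant, its center $c$ must be a $G$-fixed point of $\alpha$. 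This produces the desired fixed vector and completes the proof; there is no real obstacle beyond the bookkeeping just described.
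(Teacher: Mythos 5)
The paper offers no proof of this lemma; it is quoted directly from \cite[Proposition 2.2.9]{BdlHV}, so there is nothing internal to compare against. Your argument is the standard one (and essentially the one in the cited reference): identify the orbit of the origin with the image of the cocycle, and in the nontrivial direction apply the unique-circumcenter lemma to that bounded, $\alpha(G)$-invariant orbit. The ``if'' direction is correct as written, including the point you rightly flag, that invariance of the orbit uses invertibility of each $\alpha(h)$, and the observation that uniqueness of the minimal enclosing ball forces its center to be fixed by every isometry preserving the set.

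There is one incorrect equality in your ``only if'' direction, though the conclusion survives. You write $\|b(g)\|=\|\alpha(g)(0)-v_0\|$, but $b(g)=\alpha(g)(0)$, so $\|b(g)\|=\|\alpha(g)(0)\|$, which is not $\|\alpha(g)(0)-v_0\|$ unless $v_0=0$. What your computation actually shows is that $\|\alpha(g)(0)-v_0\|=\|v_0\|$ for all $g$, i.e.\ $b(G)$ lies on the sphere of radius $\|v_0\|$ centered at $v_0$, not at the origin. The repair is immediate: $\|b(g)\|\le\|\alpha(g)(0)-v_0\|+\|v_0\|=2\|v_0\|$, so $b$ is bounded. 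With that one line corrected, the proof is complete and matches the standard approach.
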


	\section{A Sufficient condition and counterexamples}\label{sec-ctreg}
	We begin with a simple sufficient condition.
	\begin{prop}\label{prop-split}
		Let $G_1$ and $G_2$ be two groups having Property (T). If either of the surjective homomorphisms $f_1:G_1\to H$ and $f_2:G_2\to H$ splits, then $G_1\times_HG_2$ also has Property (T).
	\end{prop}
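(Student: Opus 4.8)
The plan is to exhibit $G_1\times_HG_2$ as an extension $1\to M\to G_1\times_HG_2\to Q\to1$ in which $Q$ has Property (T) and the pair $(G_1\times_HG_2,M)$ has relative Property (T), and then quote Lemma \ref{extension}. By the obvious symmetry $G_1\times_HG_2\cong G_2\times_HG_1$ we may assume it is $f_1$ that splits, so fix a homomorphic section $s\colon H\to G_1$ with $f_1\circ s=\mathrm{id}_H$. Write $N_i=\ker f_i$, and let $p_i\colon G_1\times_HG_2\to G_i$ denote the two coordinate projections; note $p_1$ is surjective because $f_2$ is, and that $\ker p_1=\{1\}\times N_2$.

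The first step is to build, using the section $s$, a copy of $G_2$ inside the fiber product that is aligned with $N_2$. Define $\sigma\colon G_2\to G_1\times_HG_2$ by $\sigma(g_2)=\bigl(s(f_2(g_2)),\,g_2\bigr)$. Since $f_1\bigl(s(f_2(g_2))\bigr)=f_2(g_2)$ this does land in $G_1\times_HG_2$, and since $s$ is a homomorphism, $\sigma$ is a homomorphism; it is injective because $p_2\circ\sigma=\mathrm{id}_{G_2}$. The key observation is that if $g_2\in N_2$ then $f_2(g_2)=1$, so $\sigma(g_2)=(1,g_2)$; hence $\{1\}\times N_2=\sigma(N_2)\subseteq\sigma(G_2)$, and $\sigma(G_2)\cong G_2$ has Property (T). It then follows, by the standard fact that a subgroup with Property (T) forces relative Property (T) of the ambient group with respect to any of its subgroups (the same reasoning used in the proof of Lemma \ref{img}), that $(G_1\times_HG_2,\ \{1\}\times N_2)$ has relative Property (T).

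Finally I would apply Lemma \ref{extension} to the short exact sequence $1\to\{1\}\times N_2\to G_1\times_HG_2\to G_1\to1$, where the surjection is $p_1$: the quotient $G_1$ has Property (T) by hypothesis, and $(G_1\times_HG_2,\{1\}\times N_2)$ has relative Property (T) by the previous paragraph, so $G_1\times_HG_2$ has Property (T). I do not anticipate a real obstacle; the one point demanding care is that $\{1\}\times G_2$ is \emph{not} a subgroup of $G_1\times_HG_2$, so one cannot place $N_2$ inside a Property (T) subgroup through the naive inclusion — the splitting of $f_1$ is exactly what produces the twisted copy $\sigma(G_2)$ that does contain $\{1\}\times N_2$. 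One should also confirm that ``splits'' is meant in the strong sense of admitting a homomorphic section, since it is precisely multiplicativity of $s$ that makes $\sigma$ a homomorphism.
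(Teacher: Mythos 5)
Your proposal is correct and follows essentially the same route as the paper: the subgroup $\sigma(G_2)=\{(s(f_2(g)),g):g\in G_2\}$ is exactly the copy $G_2'$ the paper constructs from the splitting, and both arguments then apply Lemma \ref{extension} to the sequence $1\to\{1\}\times N_2\to G_1\times_HG_2\to G_1\to 1$. No substantive differences.
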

	
	\begin{proof}
		Assume, without loss of generality, that $f_1$ splits. The idea is to apply Lemma \ref{extension} to the short exact sequence
		\begin{equation*}
			1\to\{1\}\times N_2\to G_1\times_HG_2\to G_1\to 1.
		\end{equation*}
		Thus all we need to show is that $(G_1\times_HG_2,\{1\}\times N_2)$ has relative Property (T). If we have a sequence of subgroups $N<H<G$ with $H$ having Property (T), then $(G,N)$ has Property (T). Now we will use the splitting of $f_1$ to embed a copy of $G_2$ in $G_1\times_H G_2$ containing $\{1\}\times N_2$. This will finish the proof. Let $\phi$ be the splitting map. Then consider the subgroup $G_2':=\{(\phi(h),g):h\in H,f_2(g)=h\}$  of $G_1\times_HG_2$. The restriction of the projection $G_1\times_HG_2\to G_2$ to $G_2'$ is an isomorphism and $\{1\}\times N_2\subset G'_2$. 
	\end{proof}
	
	The rest of this section is devoted to three classes of counterexamples. 
	
	\subsection{Counterexample: Non-compact center}
	Let $H$ be any simple Lie group with trivial center associated to one of the higher rank Hermitian symmetric spaces of non-compact type. Let $G$ be the universal cover of $H$. Then $G\to H$ is an infinite cover and $G$ has Property (T) due to a result of Serre  \cite[Remark 3.5.5 (iii)]{BdlHV}. Now it follows from the proposition below that $G\times_HG$ does not have Property (T). For an example with $G$ discrete, replace $H$ with a lattice $\G$ in it and $G$ with the pre-image $\G$ in $G$.

	\begin{prop}\label{prop-central}
		Let $G\to H$ be a surjective group homomorphism whose kernel $N$ is contained in the center Also assume that $N$ is not compact. Then $G\times_H G$ does not have Property (T).
	\end{prop}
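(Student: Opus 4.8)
The plan is to reduce to a direct product using Lemma~\ref{lem-gpth} and then exploit the fact that an abelian locally compact group with Property (T) must be compact. Writing $N$ for the kernel of $G\to H$, Lemma~\ref{lem-gpth} gives $G\times_H G\cong N\rtimes G$, where $G$ acts on $N$ by conjugation. The point I would stress first is that, since $N$ lies in the center of $G$, this conjugation action is \emph{trivial}, so the semidirect product collapses to an honest direct product: $G\times_H G\cong N\times G$. (Here $N=\ker(G\to H)$ is automatically closed, so this is a genuine topological direct product.)

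Next I would pass to the quotient. The projection $N\times G\to N$ is a surjective continuous homomorphism onto a Hausdorff group, and Property (T) is inherited by such quotients (this inheritance is already used implicitly in the proof of Lemma~\ref{img}); hence if $G\times_H G$ had Property (T), then so would $N$. But $N$, being a closed subgroup of the center of $G$, is an abelian locally compact group, and such a group has Property (T) if and only if it is compact — more precisely, a locally compact group that is both amenable and has Property (T) is compact, see \cite{BdlHV}. Since $N$ is assumed non-compact, $N$ does not have Property (T), a contradiction; therefore $G\times_H G$ does not have Property (T). In the purely discrete setting the identical argument applies verbatim with ``compact'' replaced by ``finite''.

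I do not expect any genuine obstacle here: the whole argument hinges on the single observation that centrality of $N$ trivializes the conjugation action, turning $N\rtimes G$ into $N\times G$. The only thing that needs a moment's care is the topological bookkeeping (closedness of $N$ in $G$, Hausdorffness of the quotient map $N\times G\to N$), which is routine for continuous homomorphisms of topological groups. Note also that, unlike in the surrounding discussion, this argument does not even require $G$ itself to have Property (T).
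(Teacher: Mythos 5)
Your argument is correct and is essentially identical to the paper's proof: apply Lemma \ref{lem-gpth}, note that centrality of $N$ trivializes the conjugation action so that $G\times_H G\cong N\times G$, and then observe that the non-compact abelian quotient $N$ cannot have Property (T). The extra topological bookkeeping you mention is fine but not needed beyond what the paper already records.
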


	\begin{proof}
		By Lemma \ref{lem-gpth}, $G\times_HG\cong N\rtimes G$. Since $N$ is contained in the center of $G$, therefore the $G$-action on $N$ is trivial. Thus $G\times_HG\cong N\times G$, which cannot have Property (T) since its quotient $N$ being non-compact abelian, does not have Property (T).
	\end{proof}
	
	\subsection{Counterexample: Infinite presentation}
	\begin{prop}\label{infgen}
		For $i=1,2$, let $f_i:G_i\to H$, be  surjective homomorphisms from  finitely presented groups $G_i$, having Property (T), to a group $H$ which is not finitely presented. Then $G_1\times_H G_2$ does not have Property (T).
	\end{prop}
	\begin{proof}
		In fact $G_1\times_HG_2$ is not finitely generated. This is a special case of a result for subdirect products \cite[Corollary 2.4]{bm} by Bridson and Miller. 
	\end{proof}
	
	Examples of groups having Property (T) which are not finitely presented include
	\begin{enumerate}
		\item $H=\text{SL}_3(\mathbb{F}_p[X])$ (Margulis), 
		\item $H=\text{Sp}_4(\z[1/p])\ltimes\z[1/p]^4$, $p$ prime (Cornulier),
		\item $H$ is an infinite torsion quotient of uniform lattices in $\text{Sp}(n,1), n\ge 2$
		(Gromov).
	\end{enumerate}
	See \cite[\S3.4]{BdlHV} for more details.  Existence of finite presented groups $G_i$ with Property (T) which admit surjective homomorphisms to a group $H$ as above is guaranteed by a result of Shalom \cite[Theorem 3.4.5]{BdlHV} which asserts that any discrete group with Property (T) is the quotient of a finitely presented group with Property (T). 

	\subsection{Counterexample: Hyperbolically embedded free or surface groups}
	In this subsection all groups are assumed to be discrete. We start by recording the following observation. 
	
	\begin{lemma}\label{abel}
		Let $G$ be a group acting by automorphisms on another group $N$. Let $\phi$ be the action. Denote by $[G,N]$ the subgroup of $N$ generated by elements of the form $\phi(g)(n)n^{-1}$, where $g\in G$ and $n\in N$. Then the abelianization of $N\rtimes G$ is   $(N/[G,N])\times G_{ab}$. 
	\end{lemma}
		
	\begin{defn}\cite[Definition 2.2]{sun}
		Let $G$ be a group, $H$ be a subgroup and $N$ be a normal subgroup of $H$. We denote the normal closure of $N$ in $G$ by $\llangle N\rrangle$. The triple $(G,H,N)$ is said to have the \emph{Cohen-Lyndon property} if there exists a set $T$ of left coset representatives of $H\llangle N\rrangle$ in $G$ such that $\llangle N\rrangle$ is the free product of its subgroups $tNt^{-1}, t\in T$.
	\end{defn}
	
	\begin{prop}\label{prop-cl}
		Let $(G,H,N)$ have the Cohen-Lyndon property. Let $N/[H,N]$ be infinite. Then the fiber product of two copies of the quotient map $G\to G/\llangle N\rrangle$ has infinite abelianization, and hence does not have Property (T).
	\end{prop}
	
	\begin{proof}
		By Lemma \ref{abel}, it is enough to show that there exists a surjective homomorphism $\llangle N\rrangle/[G,\llangle N\rrangle]\to N/[H,N]$. To construct this homomorphism we will first define a homomorphism from each free factor of $\llangle N\rrangle$ to $N/[H,N]$. By the universal property of free products, this will define a unique homomorphism $\llangle N\rrangle\to N/[H,N]$. It will be surjective by construction. Finally we will show that $[G,\llangle N\rrangle]$ belongs to the kernel of this homomorphism. 
		
		For each $t\in T$, define the homomorphism $tNt^{-1}\to N/[H,N]$ to be a composition of the natural isomorphism, $tNt^{-1}\to N, m\mapsto t^{-1}mt$, and the quotient map $N\to N/[H,N]$. This defines a surjective homomorphism $\phi:\llangle N\rrangle\to N/[H,N]$. Note that, for any $n\in N$ and $t\in T$,
		\begin{equation*}
			\phi(tnt^{-1})=\phi(n).
		\end{equation*}
		This equation will be used repeatedly in calculations without comment. To show that $[G,\llangle N\rrangle]\subset\text{ker }\phi$, it is enough to show that each of the generators of $[G,\llangle N\rrangle]$ goes to $1$ under $\phi$. Any generator of $[G,\llangle N\rrangle]$ is of the form $gwg^{-1}w^{-1}$, where  $g\in G$ and $w\in\llangle N\rrangle$. Thus we have to show that for each $g\in G$ and $w\in\llangle N\rrangle$,
		\begin{equation}\label{ker}
			\phi(g)\phi(w)\phi(g)^{-1}=\phi(w).
		\end{equation}
		Let us first reduce (\ref{ker}) to the case where $w\in N$ and $g\in\llangle N\rrangle, H$ or $T$. Any $w\in\llangle N\rrangle$ is of the form $\prod_{i=1}^kt_in_it_i^{-1}$, where $n_i\in N$ and $t_i\in T$. Then
		\begin{align*}
			&\phi(g)\phi(w)\phi(g)^{-1}=\phi(g)(\prod_{i=1}^k\phi(t_in_it_i^{-1}))\phi(g)^{-1}\\
			=~&\phi(g)(\prod_{i=1}^k\phi(n_i))\phi(g)^{-1}=\prod_{i=1}^k\phi(g)\phi(n_i)\phi(g)^{-1}.
		\end{align*}
		Thus it is enough to show that (\ref{ker}) is true for $w\in N$. Since $T$ is a set of left coset representatives of $H\llangle N\rrangle$, therefore any $g\in G$ is of the form $g=thu$, where $t\in T, h\in H$ and $u\in\llangle N\rrangle$. Then $\phi(g)\phi(w)\phi(g)^{-1}=\phi(t)[\phi(h)[\phi(u)\phi(w)\phi(u)^{-1}]\phi(h)^{-1}]\phi(t)^{-1}$. Hence it is enough to prove (\ref{ker}) for $g$ belonging to $\llangle N\rrangle, H$ or $T$, separately. When $w\in N$ and $g\in T$ or $H$, (\ref{ker}) is immediate. So let us assume that $w\in N$ and $g\in\llangle N\rrangle$. The element $g$ is of the form $\prod_{i=1}^kt_in_it_i^{-1}$, where $n_i\in N$ and $t_i\in T$. By induction on the word length $k$ of $g$, we are reduced to the case $g=tmt^{-1}$, where $t\in T$ and $m\in N$. Then $\phi(g)\phi(w)\phi(g^{-1})=\phi(tmt^{-1})\phi(w)\phi(tmt^{-1})^{-1}=\phi(m)\phi(w)\phi(m)^{-1}$. But $mwm^{-1}w^{-1}\in [N,N]\subset [H,N]$. Hence $\phi(m)\phi(w)\phi(m)^{-1}=\phi(w)$.     
	\end{proof}
	
	\begin{remark}
		The homomorphism $\phi:\llangle N\rrangle/[G,\llangle N\rrangle]\to N/[H,N]$, in the above proof, is in fact an isomorphism. As a candidate for the inverse homomorphism, consider the map $\psi:N/[H,N]\to \llangle N\rrangle/[G,\llangle N\rrangle]$ induced by the natural inclusion $N\to \llangle N\rrangle$. Since $H\subset G$ and $N\subset \llangle N\rrangle$, therefore $[H,N]\subset[G,\llangle N\rrangle]$, and hence $\psi$ is well defined. That $\phi\psi=\text{id}$ is immediate. To see the other direction, we claim that the pre-composition of $\psi\phi$ by the quotient $q:\llangle N\rrangle\to\llangle N\rrangle/[G,\llangle N\rrangle]$ is equal to $q$. By the universal property of quotient maps this will imply that $\psi\phi=\text{id}$. The equality follows from comparing the two maps on each free factor.   
	\end{remark}
	
	To apply Proposition \ref{prop-cl}, we shall require the following consequence of Chevalley-Weil theory (in the case of free groups, the theorem is due to Gasch\"utz). See ~\cite{cw,gllm, kob}. In the form we reproduce this below, it is an immediate consequence of Theorems 3.3 and 3.4 of \cite{bkms}. Let $Q$ denote the finite  group $H/N$.
	The group $N/[N,N]$ can then be regarded as a 
	$Q$-module. Then $N/[H,N]$ is the space of co-invariants of the $Q$ action on $N/[N,N]$.
	Chevalley-Weil theory gives a non-trivial infinite summand of $N/[N,N]$  corresponding to the trivial representation. 
	It follows that $N/[H,N]$ is infinite. We summarize this below.
	
	\begin{theorem}~\cite{cw,gllm, kob},\cite[Theorems 3.3 and 3.4]{bkms}\label{cw} Let $H$ be either the free group $F_n$ \emph{(}$n>1$\emph{)} or
		the fundamental group of a 
		closed surface of genus greater than one. Let $N < H$
		be a proper finite index normal subgroup. Then $N/[H,N]$ is infinite.
	\end{theorem}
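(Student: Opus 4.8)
The plan is to deduce the statement from the representation-theoretic content of Chevalley--Weil theory as packaged in \cite[Theorems 3.3 and 3.4]{bkms}. Write $Q = H/N$, a finite group of order $d$. The abelianization $A := N/[N,N]$ is a finitely generated abelian group on which $Q$ acts by conjugation, so $V := A \otimes_{\Z} \C$ is a finite-dimensional $\C Q$-module, and a standard computation identifies $N/[H,N]$ with the group of co-invariants $A_Q := A/\langle qa - a : q\in Q,\ a\in A\rangle$; after tensoring with $\C$ this becomes $V_Q$, the co-invariant space, whose dimension equals the multiplicity of the trivial representation $\mathbf{1}$ in $V$ (equivalently $\dim V^Q$, since $Q$ is finite and $\C Q$ is semisimple). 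Thus it suffices to show that $\mathbf{1}$ occurs in $V$ with multiplicity at least one as soon as the ambient rank is large, and in fact to show the multiplicity is positive so that $N/[H,N]$, being a finitely generated abelian group of positive rank, is infinite.

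First I would invoke the Chevalley--Weil formula for the two cases at hand. If $H = F_n$ with $n>1$, then $N$ is free of rank $n' = d(n-1)+1$ by the Nielsen--Schreier formula, and the rational representation of $Q$ on $N^{\mathrm{ab}}\otimes\C$ is $\C Q^{\,n-1} \oplus \mathbf{1}$; this is precisely the content of Gasch\"utz's theorem, and it is recorded in \cite[Theorem 3.3]{bkms}. In particular the trivial summand appears with multiplicity exactly $1$ (it appears with multiplicity $n-1$ inside the regular part $\C Q^{\,n-1}$ as well, but even the single explicit copy suffices). If $H = \pi_1(\Sigma_g)$ with $g>1$ and $N$ corresponds to a degree-$d$ cover $\Sigma_{g'} \to \Sigma_g$ with $g' = d(g-1)+1$, then the Chevalley--Weil computation (for instance via the Lefschetz fixed-point / equivariant Euler characteristic argument, or directly from \cite[Theorem 3.4]{bkms}) gives $H_1(\Sigma_{g'};\C) \cong \C Q^{\,2(g-1)} \oplus \mathbf{1}^{\oplus 2}$ as $\C Q$-modules, so again the trivial representation occurs, now with multiplicity $2$ (note $H_1$ carries the information of $N^{\mathrm{ab}}$).

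From either formula we read off $\dim_\C V_Q = \dim_\C V^Q \ge 1$, hence $N/[H,N]$ has a free abelian quotient of rank $\ge 1$, hence is infinite; this completes the proof once the two ingredients above are in place. The main obstacle is purely expository rather than mathematical: one must be careful that $N/[H,N]$ is genuinely the \emph{co-invariants} $(N/[N,N])_Q$ and not the invariants, and must correctly match the normalization conventions of \cite{bkms} (whether their formula describes $H^1$ or $H_1$, and with which $Q$-action) so that the trivial-representation summand is the one that survives passage to co-invariants. Since co-invariants and invariants of a $\C Q$-module have the same dimension when $Q$ is finite, and since the trivial summand is self-dual, this bookkeeping does not affect the conclusion, but it is the only point requiring genuine care; everything else is a direct citation of Chevalley--Weil/Gasch\"utz.
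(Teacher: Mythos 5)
Your proof is correct and takes essentially the same route as the paper: the paper likewise identifies $N/[H,N]$ with the co-invariants of the $Q=H/N$-action on $N/[N,N]$ and invokes Chevalley--Weil (Gasch\"utz in the free case) to exhibit a trivial summand, whence $N/[H,N]$ has positive rank and is infinite. Your explicit multiplicity bookkeeping (the copies of the trivial representation inside the regular parts as well as the extra explicit summands) is consistent with the cited decompositions and does not affect the conclusion.
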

	
	\begin{remark}\label{rmk-ab}
		If $H$ is infinite abelian and $N < H$
		is a proper finite index (necessarily normal) subgroup,  then $N/[H,N]$ is equal to $N$ and is therefore infinite.
	\end{remark}
	
	We refer the reader to \cite{dgo} for the notion of {\bf hyperbolically embedded}
	subgroups. Examples
	include quasiconvex malnormal subgroups of hyperbolic groups and peripheral
	subgroups of (strongly) relatively hyperbolic groups.
	One  says \cite{dgo,sun} that a property $\PP$
	holds for {\bf all sufficiently deep} normal subgroups
	$N<H$ if there exists a finite set $\FF \subset H\setminus \{1\}$ such that $\PP$
	holds for all normal subgroups $N<H$ with $N\cap \FF=\emptyset$.
	We shall  need the following (see also \cite[Theorem 2.27]{dgo}):
	
	\begin{theorem}\cite[Theorem 2.5]{sun} \label{sun}
		Suppose that $G$ is a group with a hyperbolically embedded
		subgroup $H$.  Then $(G,H,N)$  has the Cohen-Lyndon property for all sufficiently deep $N < H$.
	\end{theorem}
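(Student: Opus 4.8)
The plan is to deduce the statement from the theory of \emph{very rotating families} of Dahmani--Guirardel--Osin \cite{dgo}, of which it is essentially the group-theoretic shadow, together with the Dehn-filling estimates underlying \cite[Theorem~2.27]{dgo}. First I would record the geometric input of hyperbolic embeddedness: fix a relative generating set $X$ of $G$ with respect to $H$ for which the relative Cayley graph $\Gamma=\Gamma(G,X\sqcup H)$ is Gromov hyperbolic and the metric $\widehat d$ on $H$ induced by paths avoiding $H$-edges is locally finite (this is the definition of $H\hookrightarrow_h G$). In $\Gamma$ each left coset $gH$ has diameter $\le 1$, so we may treat it as a coarse apex $v_{gH}$; distinct cosets give distinct apices, and $\mathrm{Stab}_G(v_{gH})=gHg^{-1}$. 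The crucial separation property — that a long geodesic of $\Gamma$ stays close to at most one apex along a long subsegment — is exactly what local finiteness of $\widehat d$ buys, and it is the qualitative ingredient that makes rotating families available.

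Next I would invoke the filling step. By the rotating-family construction behind \cite[Theorem~2.27]{dgo} (equivalently Osin's group-theoretic Dehn filling in this setting), there is a finite subset $\FF\subset H\setminus\{1\}$ with the property that for every $N\triangleleft H$ with $N\cap\FF=\varnothing$ the collection $\{(gNg^{-1},\,v_{gH})\}_{gH\in G/H}$ is a very rotating family on $\Gamma$ (after a fixed rescaling of the metric, if needed). Concretely, the hypothesis $N\cap\FF=\varnothing$ forces every nontrivial element of $N$ to be long in the $\widehat d$-metric, and this is precisely the numerical input that upgrades the action of $gNg^{-1}$ near the apex $v_{gH}$ to a ``rotation of large angle'' satisfying the rotating-family axioms. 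This is the step I expect to be the main obstacle: it is not a formal consequence of hyperbolic embeddedness but requires re-running the quantitative geometry of the cone-off — tracking how a short word in $N$ interacts with the hyperbolicity and separation constants — so in a self-contained write-up the real work lives here; alternatively one simply cites \cite{dgo}.

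Finally I would extract the free-product decomposition from Greendlinger's lemma for rotating families \cite[\S5]{dgo}: the subgroup $R$ of $G$ generated by $\bigcup_{g\in G}gNg^{-1}$ equals the free product $R=\ast_{t\in T}\,tNt^{-1}$, where $T$ is any set of representatives for the $R$-orbits on the apex set $\{v_{gH}\}$, the $tNt^{-1}$ being exactly the vertex rotation subgroups (pairwise in ``general position''). Now $R=\llangle N\rrangle$ by definition of the normal closure, and $\llangle N\rrangle$ acts on the apex set $G/H$ by left translation, so its orbits are the double cosets $\llangle N\rrangle\backslash G/H$; since $\llangle N\rrangle$ is normal, $\llangle N\rrangle gH=g(H\llangle N\rrangle)$, so these orbits are exactly the left cosets of $H\llangle N\rrangle$ in $G$. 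Hence $T$ is a set of left coset representatives of $H\llangle N\rrangle$, and $\llangle N\rrangle=\ast_{t\in T}tNt^{-1}$, which is the Cohen--Lyndon property; taking the $\FF$ produced above makes it hold for all sufficiently deep $N<H$.

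As a remark, the rotating-family language can be bypassed in favour of a direct Greendlinger-type small-cancellation argument in $\Gamma(G,X\sqcup H)$: a putative nontrivial reduced expression $t_1n_1t_1^{-1}\cdots t_kn_kt_k^{-1}$ over the factors $t_iNt_i^{-1}$ would trace a loop containing a subpath running more than halfway around some coset $t_iH$, and hyperbolicity together with separation let one shorten it, contradicting reducedness. This is the same content organised combinatorially, and the depth condition $N\cap\FF=\varnothing$ again enters as the small-cancellation hypothesis.
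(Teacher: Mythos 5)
The paper does not prove this statement: Theorem~\ref{sun} is imported verbatim from Sun \cite[Theorem 2.5]{sun} and used as a black box, so there is no in-paper argument to compare yours against. Judged against the proof that exists in the literature, your outline follows the standard route (Sun's own proof is a refinement of the Dahmani--Guirardel--Osin windmill/rotating-family machinery), and the one piece of genuine reasoning you supply --- identifying the $\llangle N\rrangle$-orbits on the apex set $G/H$ with the left cosets of $H\llangle N\rrangle$ via normality of $\llangle N\rrangle$, and thereby converting the orbit-indexed free product into the Cohen--Lyndon form --- is correct and is exactly the right bridge. Two caveats keep this at the level of a proof outline rather than a proof. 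First, both hard steps are delegated to \cite{dgo}: that sufficiently deep $N$ yields a very rotating family on the coned-off space (your ``main obstacle''), and the Greendlinger/windmill structure theorem for the subgroup generated by such a family. That is a legitimate level of rigor here, since the paper itself only cites the result, but it means your write-up proves nothing that is not already a citation. Second, the statement of \cite[Theorem 2.27]{dgo} gives $\llangle N\rrangle$ as a free product of \emph{some} collection of conjugates $tNt^{-1}$; to run your argument you need the sharper orbit-representative form of the windmill theorem (one free factor per $\llangle N\rrangle$-orbit of apices, with the factor being the rotation subgroup at a chosen representative). That sharper form is available in \cite{dgo} but is not the headline Dehn-filling statement, and pinning down that the transversal can be taken to be a left transversal of $H\llangle N\rrangle$ is precisely the content Sun adds; you should cite the orbit-level statement explicitly rather than the filling theorem. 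With those citations made precise, your sketch is a faithful account of the known proof.
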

	
	Combining Proposition \ref{prop-cl}, Theorems \ref{cw} and \ref{sun},
	and Remark \ref{rmk-ab}
	we immediately have the following:

	\begin{theorem}\label{thm-hypemb}
		Let $G$  have Property (T) and $H<G$ be hyperbolically embedded such that $H$ is one of the following:
		\begin{enumerate}
			\item the free group $F_n$ \emph{(}$n>1$\emph{)},
			\item the fundamental group of a 
			closed surface of genus greater than one,
			\item infinite abelian.
		\end{enumerate}
		Then, for all sufficiently deep finite index normal subgroups $N<H$, 
		the fiber product of two copies of the quotient map $G\to G/\llangle N\rrangle$ does not have Property (T).
	\end{theorem}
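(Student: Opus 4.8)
The plan is to assemble Theorem \ref{thm-hypemb} directly from the three ingredients already proved: Theorem \ref{sun} (existence of the Cohen-Lyndon property), Theorem \ref{cw} together with Remark \ref{rmk-ab} (infiniteness of $N/[H,N]$), and Proposition \ref{prop-cl} (the Cohen-Lyndon property plus non-compactness of $N/[H,N]$ kills Property (T) of the fiber product). Since $H$ is hyperbolically embedded in $G$, Theorem \ref{sun} gives a finite set $\FF\subset H\setminus\{1\}$ such that $(G,H,N)$ has the Cohen-Lyndon property for every normal subgroup $N<H$ with $N\cap\FF=\emptyset$. The phrase ``sufficiently deep'' in the statement is interpreted exactly via this $\FF$, so I would fix this $\FF$ at the outset and restrict attention to finite index normal $N<H$ avoiding $\FF$.

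Next I would observe that a finite index subgroup of $H$ meeting $\FF$ trivially always exists: since $H$ is residually finite in each of the three cases (free, surface, or abelian groups are residually finite), for each $f\in\FF$ there is a finite index normal subgroup missing $f$, and intersecting finitely many of these yields a finite index normal $N<H$ with $N\cap\FF=\emptyset$ --- and one can take $N$ proper by intersecting further with a proper finite index normal subgroup. For any such $N$, Theorem \ref{cw} (cases (1) and (2)) or Remark \ref{rmk-ab} (case (3)) shows $N/[H,N]$ is infinite. An infinite finitely generated abelian group --- and $N/[H,N]$ is a quotient of the finitely generated group $N$, hence finitely generated, and it is abelian since it is a quotient of $N/[N,N]$ --- is non-compact in the discrete topology. (Alternatively, no appeal to finite generation is needed: an infinite discrete group is non-compact.)

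Finally I would apply Proposition \ref{prop-cl} with $\PP$-triple $(G,H,N)$: its hypotheses are precisely that $(G,H,N)$ has the Cohen-Lyndon property and that $N/[H,N]$ is non-compact, both of which have just been verified. The conclusion of Proposition \ref{prop-cl} is that the fiber product of two copies of the quotient map $G\to G/\llangle N\rrangle$ does not have Property (T), which is exactly the assertion of Theorem \ref{thm-hypemb}. Since the argument is a direct concatenation of cited results, there is no real obstacle; the only point requiring a word of care is confirming that ``sufficiently deep'' as used in Theorem \ref{sun} is compatible with the additional requirements ``finite index'' and ``proper'' imposed in Theorem \ref{thm-hypemb}, i.e.\ that the family of finite index proper normal subgroups avoiding $\FF$ is non-empty --- which follows from residual finiteness of $H$ as noted above.
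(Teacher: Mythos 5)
Your proposal is correct and is exactly the paper's argument: the paper proves Theorem \ref{thm-hypemb} by directly combining Proposition \ref{prop-cl}, Theorems \ref{cw} and \ref{sun}, and Remark \ref{rmk-ab}, just as you do. Your added observation that residual finiteness of $H$ guarantees the family of sufficiently deep finite index proper normal subgroups is non-empty is a sensible extra check, though the paper does not spell it out.
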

	
	We conclude this section with the following question:
	
	\begin{qn}\label{q-hypemb}
		In Theorem \ref{thm-hypemb}, if $H$ does not have Property (T), does the conclusion continue to hold? Shalom's generalization of the Delorme-Guichardet theorem in terms
		of reduced first cohomology \cite[\S 3.2]{BdlHV} might be relevant here.
	\end{qn}
		
	\section{Necessary and sufficient conditions}\label{sec-ns}
	In this section, we shall deal with the special case of a fiber product corresponding to two copies of the same surjective homomorphism $q:G\to H$.  Then, $G\times_HG\cong N\rtimes G$ by Lemma \ref{lem-gpth}, where $N$ is the kernel of $q$ and the action of $G$ on $N$ is via conjugation. For any semidirect product $N\rtimes G$ (not necessarily coming from a fiber product), we assume that $G$ has Property (T). We want to know when $N\rtimes G$ has Property (T). Property (T) can be expressed in terms of normalized functions of positive type \cite[Chapitre 5, Th\'eor\`eme 11]{dlHV} or functions conditionally of negative type \cite[Theorem 2.10.4]{BdlHV}. In our situation, we show that Property (T) can be expressed in terms of $G$ fixed points in the space of such functions on $N$. See Theorem \ref{fupoty} and Theorem \ref{funety} below. When $N$ is abelian we prove a similar result involving $G$ fixed points in the space of regular measures on the unitary dual of $N$, see Theorem \ref{prob}.

	\subsection{Normalized functions of positive type} 
	
	\begin{theorem}\label{fupoty}
		Let $G$ be a locally compact group having Property (T), which acts continuously by automorphisms on a $\sigma$-compact, locally compact group $N$. This induces an action of $G$ on the space $\mathcal{P}_1(N)$ of normalized functions of positive type on $N$. Let $\mathcal{P}_1(N)^G$ be the set of fixed points. Then $N\rtimes G$ has Property (T) if and only if every sequence $\{f_n\}\subset\mathcal{P}_1(N)^G$, that converges uniformly on compact subsets to the constant function $1$, converges uniformly on $N$.
	\end{theorem}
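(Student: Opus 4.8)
The plan is to run everything through the characterization of Property (T) by normalized functions of positive type (\cite[Chapitre 5, Th\'eor\`eme 11]{dlHV}): a $\sigma$-compact locally compact group $\Gamma$ has Property (T) if and only if every sequence in $\mathcal{P}_1(\Gamma)$ converging to $1$ uniformly on compact subsets already converges to $1$ uniformly on $\Gamma$. (Here $N\rtimes G$ is $\sigma$-compact: $N$ is by hypothesis, and $G$ is compactly generated since it has Property (T).) So the statement reduces to translating sequences in $\mathcal{P}_1(N\rtimes G)$ to sequences in $\mathcal{P}_1(N)^G$ and back. The bridge is the GNS picture of Theorem \ref{orgGNSfupoty}: if $f\in\mathcal{P}_1(N)^G$ has GNS triple $(\pi_f,V_f,v_f)$, then $G$-invariance of $f$ together with uniqueness of GNS produces for each $g\in G$ a unitary $U_g$ of $V_f$ with $U_g v_f=v_f$ and $U_g\pi_f(m)U_g^{-1}=\pi_f(g\cdot m)$; uniqueness also makes $g\mapsto U_g$ a (strongly continuous) unitary representation, so $(\pi_f,U)$ assembles into a unitary representation $\tilde\pi_f$ of $N\rtimes G$ on $V_f$ whose vector $v_f$ is fixed by $G$. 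Conversely, a $G$-fixed unit vector in any unitary representation of $N\rtimes G$ yields, by restriction to $N$, a function in $\mathcal{P}_1(N)^G$.

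\emph{Property (T) of $N\rtimes G\ \Rightarrow$ the convergence condition.} Let $\{f_n\}\subset\mathcal{P}_1(N)^G$ converge to $1$ uniformly on compacta. Using the bridge, set $\tilde f_n(m,g):=\langle\tilde\pi_{f_n}(m,g)v_{f_n},v_{f_n}\rangle$; since $v_{f_n}$ is $G$-fixed this equals $f_n(m)$, so $\tilde f_n\in\mathcal{P}_1(N\rtimes G)$ is independent of the $G$-coordinate. As every compact subset of $N\rtimes G$ lies in a product $K_N\times K_G$, we get $\tilde f_n\to 1$ uniformly on compacta in $N\rtimes G$; Property (T) of $N\rtimes G$ then gives $\tilde f_n\to 1$ uniformly on $N\rtimes G$, hence $f_n\to 1$ uniformly on $N$.

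\emph{The convergence condition $\Rightarrow$ Property (T) of $N\rtimes G$.} Verify the $\mathcal{P}_1$-criterion: let $\{h_n\}\subset\mathcal{P}_1(N\rtimes G)$ converge to $1$ uniformly on compacta, with GNS data $(\rho_n,W_n,w_n)$. Restricting to the subgroup $G$ shows $w_n$ is a $(K,\epsilon_n)$-invariant unit vector for $\rho_n|_G$ with $\epsilon_n\to 0$, where $K$ is a fixed compact generating set of $G$. Since $G$ has Property (T), Proposition \ref{alinv} furnishes $G$-fixed unit vectors $w_n'\in W_n$ with $\|w_n-w_n'\|<\delta_n$ and $\delta_n\to 0$; and since $w_n'$ is $G$-fixed, $\|\rho_n(g)w_n-w_n'\|=\|w_n-w_n'\|<\delta_n$ \emph{uniformly in} $g\in G$. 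Put $f_n(m):=\langle\rho_n(m)w_n',w_n'\rangle$: using $\rho_n(g\cdot m)=\rho_n(g)\rho_n(m)\rho_n(g)^{-1}$ and $\rho_n(g)w_n'=w_n'$ one checks $f_n\in\mathcal{P}_1(N)^G$, and expanding $h_n(m,g)=\langle\rho_n(m)\rho_n(g)w_n,w_n\rangle$ — with $\rho_n(g)w_n$ and $w_n$ both within $\delta_n$ of $w_n'$ — yields $|h_n(m,g)-f_n(m)|\le 3\delta_n$ for every $(m,g)\in N\rtimes G$. Taking $g=e$ and using $h_n\to 1$ uniformly on compacta shows $f_n\to 1$ uniformly on compacta in $N$, so by hypothesis $f_n\to 1$ uniformly on $N$. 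Therefore $\sup_{(m,g)}|h_n(m,g)-1|\le 3\delta_n+\sup_m|f_n(m)-1|\to 0$, i.e.\ $h_n\to 1$ uniformly on $N\rtimes G$, and $N\rtimes G$ has Property (T).

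The step carrying the real content is the last one: the hypothesis only constrains the $f_n$ on the subgroup $N$, so naively one only gets uniform convergence of $\{h_n\}$ on $N$, which is insufficient for Property (T) of $N\rtimes G$. What rescues the argument is precisely the quantitative upgrade from almost-invariance to near-invariance available because $G$ has Property (T) (Proposition \ref{alinv}): it forces every translate $\rho_n(g)w_n$ to lie within $\delta_n$ of one and the same $G$-fixed vector $w_n'$, and this uniformity in $g$ is what propagates the estimate from $N$ to all cosets of $N$ in $N\rtimes G$. The remaining points — continuity of $g\mapsto U_g$, and the passage from ``for each $\delta$, eventually'' to a single sequence $\delta_n\to 0$ — are routine given $\sigma$-compactness.
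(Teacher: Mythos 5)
Your proof is correct and is essentially the paper's argument in different packaging: both directions rest on the same two ingredients, namely the $G$-equivariant GNS extension of an $f\in\mathcal{P}_1(N)^G$ to a representation of $N\rtimes G$ with a $G$-fixed cyclic vector (the paper's Lemma \ref{GNSfupoty}), and the use of Proposition \ref{alinv} to replace almost-invariant vectors by genuinely $G$-fixed ones, whose key payoff is exactly the uniformity in $g$ you highlight (the paper's Lemma \ref{alinvect}). The only difference is cosmetic: you route both implications through the de la Harpe--Valette $\mathcal{P}_1$-characterization of Property (T) applied to the $\sigma$-compact group $N\rtimes G$, whereas the paper works directly with representations and closes the reverse direction with the bounded-orbit Lemma \ref{bddorbitur} rather than citing the converse half of that characterization.
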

	\begin{remark}
		If we put $G=\{1\}$ in the above theorem then we get back the statement of \cite[Chapitre 5, Th\'eor\`eme 11]{dlHV}.
	\end{remark}
	The proof of
	\cite[Chapitre 5, Th\'eor\`eme 11]{dlHV} can be adapted to our situation using a couple of preliminary Lemmas. The first lemma can be seen as an adaptation of the GNS construction to our case. 
	\begin{lemma}\label{GNSfupoty}
		Let $G$ be a topological group acting continuously by automorphisms on a topological group $N$. Then any $f\in\mathcal{P}_1(N)^G$ is of the form $f(n)=\langle\Pi(n,1)v,v\rangle$, where $(\Pi,V_\Pi)$ is a unitary representation of $N\rtimes G$ and $v\in V_\Pi$ is a unit vector which is fixed by $\Pi(\{1\}\times G)$. 
	\end{lemma}
	
	\begin{proof}
		The $G$-action on $N$ induces a $G$-action on both sides of the GNS correspondence for the group $N$, as given in Theorem \ref{orgGNSfupoty}. Denoting the $G$ action on $N$ by $\phi$, the induced action on $\mathcal{P}_1(N)$ is given by $(g\cdot f)(x)=f(\phi(g)^{-1}x)$. The action on the equivalence class of cyclic representations is given by $(g\cdot\pi)(n)=\pi(\phi(g^{-1})n)$. Note that the map giving this correspondence is $G$-equivariant. Thus, if $f\in\mathcal{P}_1(N)^G$ then there exists $(\pi,V_\pi,v)$, whose equivalence class is $G$-invariant,  such that $f(n)=\langle\pi(n)v,v\rangle$. The $G$-invariance of the class of $(\pi,V_\pi,v)$ means that for each $g\in G$ there exists a unitary operator $T_g:V_\pi\to V_\pi$ such that $T_g(g\cdot\pi)(n)=\pi(n)T_g$, for all $n\in N$, and $T_gv=v$. Since $(\pi,V_\pi)$ is cyclic, therefore $T_g$ is the unique operator satisfying these properties. Uniqueness implies that $T_{g_1g_2}=T_{g_1}T_{g_2}$, for all $g_1,g_2\in G$. Now define a representation $(\Pi,V_\pi)$ of $N\rtimes G$ as $\Pi(n,g)=\pi(n)T_g$. Let us check this is indeed a representation: 
		\begin{align*}
			\Pi((n_1,g_1)(n_2,g_2)) &=\Pi(n_1\phi(g_1)(n_2),g_1g_2) =\pi(n_1\phi(g_1)n_2)T_{g_1g_2}\\
			&=\pi(n_1)\pi(\phi(g_1)n_2)T_{g_1}T_{g_2}=\pi(n_1)(g_1^{-1}\cdot\pi)(n_2)T_{g_1}T_{g_2}\\
			&=\pi(n_1)T_{g_1}\pi(n_2)T_{g_2}=\Pi(n_1,g_1)\Pi(n_2,g_2). 
		\end{align*}
		Now let us check that $\Pi$ is continuous. That is, we have to check that for any $w\in V_\pi$, the orbit map $G\to V_\pi, (n,g)\mapsto \Pi(n,g)w$ is continuous. Since $\Pi$ is unitary it is enough to check  continuity at $(1,1)$. Since $$\|\Pi(n,g)w-w\|=\|\Pi(n,1)\Pi(1,g)w-\Pi(1,g)w\|+\|\Pi(1,g)w-w\|$$ and $\Pi|_{N\times\{1\}}=\pi$ is already continuous,  it suffices to prove that $\Pi|_{\{1\}\times G}$ is continuous. Since the subspace $D$, which is the linear span of $\{\pi(n)v:n\in N\}$, is dense in $V_\pi$,  it is enough to show that $g\mapsto\Pi(1,g)w$ is continuous for all $w\in D$. Any $w\in D$ is of the form $\sum_{i=1}^k\lambda_i\pi(n_i)v$, where $\lambda_i\in\c$ and $n_i\in N$. Now the continuity of $\Pi|_{\{1\}\times G}$ follows from the continuity of $\phi,\pi$ and the following inequality:
		\begin{align*}
			&\|\Pi(1,g)(\sum_{i=1}^k\lambda_i\pi(n_i)v)-\sum_{i=1}^k\lambda_i\pi(n_i)v\|=\|\sum_{i=1}^k\lambda_iT_g\pi(n_i)v-\sum_{i=1}^k\lambda_i\pi(n_i)v\|\\
			=~&\|\sum_{i=1}^k\lambda_i\pi(\phi(g)n_i)T_gv-\sum_{i=1}^k\lambda_i\pi(n_i)v\|\le\sum_{i=1}^k|\lambda_i|\|\pi(\phi(g)n_i)v-\pi(n_i)v\|\qedhere
		\end{align*}
		\end{proof}
	The next lemma tells us that if $G$ has Property (T) then in a representation of $N\rtimes G$ which has almost invariant vectors, the almost invariant vectors can be chosen to be $G$-invariant. 
	
	\begin{lemma}\label{alinvect}
		Let $(\pi,V_\pi)$ be a unitary representation of $N\rtimes G$ which has almost invariant vectors. Let $V_\pi^G$ denote the subspace of $\pi(\{1\}\times G)$-invariant vectors. If $G$ has Property (T) then for any compact $K\subset N\rtimes G$ and $\delta >0$, there exists a unit vector $w\in V_\pi^G$ which is $(K,\delta)$-invariant.
	\end{lemma}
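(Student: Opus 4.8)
The plan is to push an almost-invariant vector of $\pi$ onto the subspace $V_\pi^G$ by applying Proposition \ref{alinv} to the restricted representation $\pi|_{\{1\}\times G}$, and then to absorb the resulting error into $\delta$ via the triangle inequality. Since a locally compact group with Property (T) is compactly generated, I would begin by fixing once and for all a compact generating set $Q$ of $G$.

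Given the compact set $K \subset N \rtimes G$ and $\delta > 0$, the first step is to fix the auxiliary parameters in the right order. Choose $\delta_0 := \min\{2, \delta/3\}$, so that $0 < \delta_0 \le 2$; applying Proposition \ref{alinv} to $G$ (with generating set $Q$ and target distance $\delta_0$) yields some $\epsilon_1 > 0$ such that, in \emph{any} unitary representation of $G$, every $(Q,\epsilon_1)$-invariant unit vector is within $\delta_0$ of a $G$-invariant unit vector. Then set $\epsilon := \min\{\epsilon_1, \delta/3\}$, and observe that $C := K \cup (\{1\}\times Q)$ is compact in $N \rtimes G$.

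The second step uses the hypothesis that $\pi$ has almost invariant vectors: pick a $(C,\epsilon)$-invariant unit vector $v \in V_\pi$. Restricting attention to $\{1\}\times G$, the vector $v$ is then $(Q,\epsilon_1)$-invariant for the unitary representation $\pi|_{\{1\}\times G}$ of $G$, so Proposition \ref{alinv} supplies a unit vector $w \in V_\pi^G$ with $\|v - w\| < \delta_0$. The final step is the verification that $w$ is $(K,\delta)$-invariant: for $k \in K$, unitarity of $\pi(k)$ together with the $(K,\epsilon)$-invariance of $v$ gives
\[
\|\pi(k)w - w\| \le \|\pi(k)w-\pi(k)v\| + \|\pi(k)v-v\| + \|v-w\| = 2\|v - w\| + \|\pi(k)v - v\| < 2\delta_0 + \epsilon \le \delta,
\]
which is what we want.

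This argument is essentially bookkeeping around Proposition \ref{alinv} and the GNS/Property (T) machinery already set up, so I do not expect a serious obstacle. The one point that needs care is precisely the order of quantifiers: $\epsilon_1$ must be produced from Property (T) of $G$ \emph{before} the almost-invariant vector $v$ is selected, so that $v$ may be chosen simultaneously $(Q,\epsilon_1)$-invariant and $(K,\epsilon)$-invariant — this is why one feeds the enlarged compact set $C = K \cup (\{1\}\times Q)$ (rather than just $K$) into the almost-invariance hypothesis on $\pi$.
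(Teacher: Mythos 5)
Your proposal is correct and follows essentially the same route as the paper: enlarge the compact set to include a compact generating set of $\{1\}\times G$, apply Proposition \ref{alinv} to the restriction $\pi|_{\{1\}\times G}$ to replace an almost-invariant vector by a nearby $G$-invariant one, and finish with the triangle inequality. Your explicit handling of the quantifier order (choosing $\epsilon_1$ before selecting $v$) is exactly what the paper's phrase ``without loss of generality we may assume that $K$ contains a compact generating set'' implicitly does.
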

	
	\begin{proof}
		Without loss of generality we may assume that $K$ contains a compact generating set of $\{1\}\times G$ and $\delta\le 2$. By Proposition \ref{alinv}, there exists $\epsilon>0$ such that given a $(K,\epsilon)$-invariant unit vector $v$ there exists a unit vector $w\in V_\pi^G$ such that $\|v-w\|<\delta/3$. We may assume that $v$ is a $(K,\min(\epsilon,\delta/3))$-invariant unit vector. Then for all $k\in K$, we have
		\begin{align*}
			&\|\pi(k)w-w\|\le \|\pi(k)w-\pi(k)v\|+\|\pi(k)v-v\|+\|v-w\|\\
			\le~& \|\pi(k)\|\|v-w\|+\|\pi(k)v-v\|+\|v-w\| < \delta/3+\delta/3+\delta/3=\delta.\qedhere
		\end{align*}
	\end{proof}
	\noindent\textit{Proof of Theorem \ref{fupoty}.}
	($\Rightarrow$) Let $\{f_n\}$ be a sequence in $\mathcal{P}_1(N)^G$ which converges uniformly on compact subsets to $1$. Given $\delta >0$, we wish to show that for all large $n$, $|f_n(x)-1|<\delta$ for all $x\in N$. By Lemma \ref{GNSfupoty}, for each $n\in\n$, there exists a representation $(\pi_n,V_n)$ of $N\rtimes G$ and $v_n\in V_n$ such that $f_n(x)=\langle\pi_n(x,1)v_n,v_n\rangle$ and $\pi_n(\{1\}\times G)v_n=v_n$.  
	Since $N\rtimes G$ has Property (T) by assumption,  it has a compact generating set $K$. By Proposition \ref{alinv}, there exists $\epsilon>0$ such that if any representation $(\pi,V_\pi)$ of $N\rtimes G$ has a $(K,\epsilon)$-invariant unit vector $v$, then there exists a unit invariant vector $w$ such that $\|v-w\|<\delta/2$. We will apply this to the representations $\pi_n$. Let $p$ be the projection onto the first coordinate of $N\rtimes G$. By definition of convergence on compact subsets, for large $n$, $|1-f_n(x)|<\epsilon^2/2$ for all $x\in p(K)$. Then $$\|\pi_n(x,1)v_n-v_n\|^2\le 2|1-\langle\pi_n(x,1)v_n,v_n\rangle|<\epsilon^2,$$ for large $n$ and $x\in p(K)$. So for any $(x,g)\in K$ and large enough $n$, we have $$\|\pi_n(x,g)v_n-v_n\|=\|\pi_n(x,1)\pi_n(1,g)v_n-v_n\|=\|\pi_n(x,1)v_n-v_n\|<\epsilon.$$ Thus there exist unit invariant vectors $w_n\in V_n$ such that $\|v_n-w_n\|<\delta/2$. Now for large $n$ and all $x\in N$, we have
	\begin{align*}
		&|f_n(x)-1|=|\langle\pi_n(x,1)v_n,v_n\rangle-\langle\pi_n(x,1)w_n,w_n\rangle|\\
		=~&|\langle\pi_n(x,1)v_n,v_n\rangle-\langle\pi_n(x,1)v_n,w_n\rangle+\langle\pi_n(x,1)v_n,w_n\rangle-\langle\pi_n(x,1)w_n,w_n\rangle|\\
		\le ~& |\langle\pi_n(x,1)v_n,v_n-w_n\rangle|+|\langle\pi_n(x,1)(v_n-w_n),w_n\rangle|\\
		\le~ &\|\pi_n(x,1)\|\|v_n\|\|v_n-w_n\|+\|\pi_n(x,1)\|\|v_n-w_n\|\|w_n\|
		\le  2\|v_n-w_n\|<\delta.
	\end{align*}
	
	\noindent ($\Leftarrow$) Given a representation $\pi$ of $N\rtimes G$ which has almost invariant vectors, we will show that it must have an invariant vector. Since $N$ is $\sigma$-compact, there exists a sequence $K_1\subset K_2\subset\cdots$ of compact subsets of $N$, such that $\cup_iK_i=N$. By Lemma \ref{alinvect}, for each $n$, there exists $v_n\in V_\pi^G$ which is $(K_n,1/n)$-invariant. Define functions $f_n$ on $N$ as $$f_n(x):=\langle\pi(x,1)v_n,v_n\rangle.$$
	
	Observe first that the sequence $\{f_n\}$ is contained in $\mathcal{P}_1(N)^G$ and converges uniformly to $1$ on compact subsets of $N$. To see this, take any compact $K\subset N$, then $K\subset K_n$ for all large $n$ and hence for all $k\in K$, we have
	$$|f_n(k)-1|=|\langle\pi(k,1)v_n,v_n\rangle-1|\le\|\pi(k,1)v_n-v_n\|<1/n.$$ 
	
	By hypothesis, $f_n\to 1$ uniformly on $N$. Then for large $n$, we have $\sup_{x\in N}|f_n(x)-1|\le 1/2$. This implies that 
	$$\inf_{x\in N}\text{Re}\langle\pi(x,1)v_n,v_n\rangle\ge 1/2.$$ Then for any $(x,g)\in N\rtimes G$, we have $$\text{Re}\langle\pi(x,g)v_n,v_n\rangle=\text{Re}\langle\pi(x,1)\pi(1,g)v_n,v_n\rangle=\text{Re}\langle\pi(x,1)v_n,v_n\rangle\ge 1/2.$$ Finally Lemma \ref{bddorbitur} implies that $\pi$ has an invariant vector.\qed

	\subsection{Functions conditionally of negative type}
	The Delorme-Guichardet Theorem \cite[Theorem 2.12.4]{BdlHV} says that, for $\sigma$-compact locally compact groups, Property (T) is equivalent to Property (FH). Recall that a group $G$ is said to have Property (FH) if any affine isometric action of $G$ on a real Hilbert space has a fixed point. Property (FH) can be expressed in terms of functions conditionally of negative type \cite[Theorem 2.10.4]{BdlHV}. We now derive an analogous statement in the case of a semidirect product $N\rtimes G$, where $G$ already has Property (FH). 
	
	\begin{theorem}\label{funety}
		Let $G$ be a topological group having Property (FH), which acts continuously by automorphisms on a topological group $N$. This induces an action of $G$ on the space of functions conditionally of negative type on $N$. Then $N\rtimes G$ has Property (FH) if and only if every $G$-invariant function conditionally of negative type on $N$ is bounded.  
	\end{theorem}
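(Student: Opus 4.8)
The plan is to follow the proof of Theorem \ref{fupoty} line by line, substituting the GNS correspondence for functions conditionally of negative type (Theorem \ref{orgGNSfunety}) for the one for functions of positive type, and Lemma \ref{bddorbitai} (bounded cocycle $\Rightarrow$ fixed point) for Lemma \ref{bddorbitur}. The first step is an affine analogue of Lemma \ref{GNSfupoty}: \emph{every $G$-invariant function $\psi$ conditionally of negative type on $N$ has the form $\psi(n)=\|\alpha(n,1)(0)\|^2$ for some continuous affine isometric action $(\alpha,\mathcal{H})$ of $N\rtimes G$ in which $\alpha(\{1\}\times G)$ fixes $0\in\mathcal{H}$.} To prove this, take the cyclic affine isometric action $(\alpha_0,\mathcal{H}_0)$ of $N$ attached to $\psi$ by Theorem \ref{orgGNSfunety}. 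The $G$-invariance of $\psi$ means that for every $g$ the cyclic affine action $n\mapsto\alpha_0(\phi(g)^{-1}n)$ of $N$ has the same associated function $\psi$; the uniqueness built into the GNS construction (the argument proving well-definedness of Theorem \ref{orgGNSfunety}) then provides a unique orthogonal operator $A_g$ on $\mathcal{H}_0$ implementing the equivalence, with $A_g(0)=0$, and uniqueness forces $g\mapsto A_g$ to be an orthogonal representation of $G$. Set $\alpha(n,g):=\alpha_0(n)\circ A_g$; a direct computation with the semidirect product multiplication shows this is an affine isometric action of $N\rtimes G$, it fixes $0$ on $\{1\}\times G$, and $\|\alpha(n,1)(0)\|^2=\psi(n)$. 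Continuity of $\alpha$ reduces, exactly as in Lemma \ref{GNSfupoty}, to continuity on the dense span of $\{\alpha_0(n)(0):n\in N\}$, which follows from continuity of $\alpha_0$ and of the $G$-action on $N$.

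Granting this lemma, the direction ($\Rightarrow$) is short. Suppose $N\rtimes G$ has Property (FH) and $\psi$ is a $G$-invariant function conditionally of negative type on $N$. The lemma gives a continuous affine isometric action $\alpha$ of $N\rtimes G$ with $\psi(n)=\|\alpha(n,1)(0)\|^2$; by hypothesis $\alpha$ has a fixed point $\xi$, and since every $\alpha(h)$ is an affine isometry fixing $\xi$ we get $\|\alpha(n,1)(0)-\xi\|=\|\xi\|$, so $\psi$ is bounded (by $4\|\xi\|^2$).

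For ($\Leftarrow$), assume every $G$-invariant function conditionally of negative type on $N$ is bounded, and let $(\alpha,\mathcal{H})$ be a continuous affine isometric action of $N\rtimes G$. Since $G$ has Property (FH), $\alpha(\{1\}\times G)$ has a fixed point, and after translating the origin we may assume $\alpha(1,g)(0)=0$ for all $g$, so each $\alpha(1,g)$ is orthogonal. Then $\psi(n):=\|\alpha(n,1)(0)\|^2$ is conditionally of negative type on $N$, being the squared norm of the cocycle of $\alpha|_{N\times\{1\}}$. It is $G$-invariant: from $(\phi(g)^{-1}n,1)=(1,g^{-1})(n,1)(1,g)$ and $\alpha(1,g)(0)=0$ one gets $\psi(\phi(g)^{-1}n)=\|\alpha(1,g^{-1})\alpha(n,1)(0)\|^2=\|\alpha(n,1)(0)\|^2=\psi(n)$, using orthogonality of $\alpha(1,g^{-1})$. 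By hypothesis $\psi$ is bounded, hence so is the cocycle of $\alpha|_{N\times\{1\}}$, and Lemma \ref{bddorbitai} shows the set $\mathcal{H}^N$ of $\alpha(N\times\{1\})$-fixed vectors is nonempty. Using $\alpha(n,g)=\alpha(1,g)\alpha(\phi(g)^{-1}n,1)$ one checks that $\mathcal{H}^N$ is a closed, $\alpha(\{1\}\times G)$-invariant affine subspace; applying Property (FH) of $G$ to its affine isometric action on $\mathcal{H}^N$ produces a point fixed by $\{1\}\times G$ and by $N\times\{1\}$, hence by $N\rtimes G$.

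The step I expect to be the crux is the structural point in ($\Leftarrow$): boundedness of the $N$-cocycle only gives a fixed point for $N$, so one must invoke Property (FH) of $G$ a \emph{second} time, now on the space $\mathcal{H}^N$ of $N$-fixed vectors; this is legitimate precisely because $\mathcal{H}^N$ is $G$-invariant. The other place requiring care is the extension lemma — verifying that the operators $A_g$ assemble into a genuine representation and that the resulting action of $N\rtimes G$ is continuous — but this is entirely parallel to Lemma \ref{GNSfupoty}.
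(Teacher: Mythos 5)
Your proof is correct, and its overall architecture (GNS extension lemma for $G$-invariant functions conditionally of negative type, plus the bounded-cocycle criterion of Lemma \ref{bddorbitai}) matches the paper's; the ($\Rightarrow$) direction and your sketch of the extension lemma are essentially identical to what the paper does (the paper omits the proof of its Lemma \ref{GNSfunety}, which is exactly the argument you supply). The one genuine divergence is in the endgame of ($\Leftarrow$). You produce a fixed point in two stages: boundedness of the $N$-cocycle gives $\mathcal{H}^N\neq\emptyset$, and then you invoke Property (FH) of $G$ a \emph{second} time on the closed $G$-invariant affine subspace $\mathcal{H}^N$ — the standard ``extension of FH groups'' argument, and your verification that $\mathcal{H}^N$ is $G$-invariant is what makes it legitimate. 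The paper instead avoids the second application of (FH) entirely: starting from the $G$-fixed vector $v$ it computes $\|\alpha(x,g)v-v\|^2=\|\alpha(x,1)v-v\|^2=\psi(x)$ for \emph{every} $(x,g)\in N\rtimes G$, so the triangle inequality $\|\alpha(x,g)(0)\|\le\sqrt{\psi(x)}+2\|v\|$ bounds the full cocycle of the $N\rtimes G$-action at once, and a single application of Lemma \ref{bddorbitai} to the whole semidirect product finishes the proof. So the step you flag as the crux (``one \emph{must} invoke Property (FH) of $G$ a second time'') is not actually forced; the paper's one-shot estimate is slightly more economical, while your two-step version is equally valid and perhaps more transparent structurally. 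Both arguments are complete.
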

	
	\begin{remark}
		Putting $G=\{e\}$ in the above theorem give us back the statement of \cite[Theorem 2.10.4, (i)$\Leftrightarrow$(iii)]{BdlHV}.
	\end{remark}
	As before, we prove a GNS construction lemma that will allow us to adapt the proof of \cite[Theorem 2.10.4]{BdlHV} to a proof of Theorem \ref{funety}. 
	
	\begin{lemma}\label{GNSfunety}
		Let $G$ be a topological group which acts continuously by automorphisms on a topological group $N$. Let $\psi$ be a $G$-invariant function conditionally of negative type on $N$. Then there exists an affine isometric action $(A,\mathcal{H})$ of $N\rtimes G$ such that $\psi(x)=\|A(x,1)(0)\|^2$.
		%and $A(1,g)(0)=0$ for all $g\in G$.  
	\end{lemma}
	
	The proof is very similar to that of Lemma \ref{GNSfupoty}, so we omit it.
	
	\vspace{.2cm}
	
	\noindent\textit{Proof of Theorem \ref{funety}.}
	($\Rightarrow$) Let $\psi$ be a $G$-invariant function conditionally of negative type on $N$. By Lemma \ref{GNSfunety}, there exists an affine isometric action $(\alpha,\mathcal{H})$ of $N\rtimes G$ such that $\psi(x)=\|\alpha(x,1)(0)\|^2$.
	%and $\alpha(1,g)(0)=0$ for all $g\in G$.
	Since $N\rtimes G$ has Property (FH) therefore $\alpha$ has a fixed point. By Lemma \ref{bddorbitai}, the corresponding $1$-cocycle $(x,g)\mapsto\alpha(x,g)(0)$ is bounded. Since $\psi$ is the restriction of the norm square of this map to $N\times\{1\}$, therefore $\psi$ is also bounded.
	
	\vspace{.1cm}
	
	\noindent($\Leftarrow$) Let $(\alpha,\mathcal{H})$ be an affine isometric action of $N\rtimes G$. We wish to show that $\alpha$ has a fixed point. By Lemma \ref{bddorbitai}, it is enough to show that the cocycle corresponding to $\alpha$ is bounded. That is, the map $$N\rtimes G\to\mathcal{H}, (x,g)\mapsto\alpha(x,g)(0)$$ is bounded.  The restriction of $\alpha$ to $\{1\}\times G$ is an affine isometric action of $G$. Since $G$ has Property (FH), there exists  $v\in\mathcal{H}$ such that $\alpha(1,g)v=v$, for all $g\in G$. The function $\psi:N\to\r$ given by $\psi(x)=\|\alpha(x,1)v-v\|^2$ is a function conditionally of negative type on $N$. The following calculation tells us that $\psi$ is $G$-invariant.
	\begin{align*}
		&\psi(\phi(g)x)=\|\alpha(\phi(g)x,1)v-v\|^2
		=\|\alpha((1,g)(x,1)(1,g^{-1}))v-v\|^2\\
		=~&\|\alpha(1,g)\alpha(x,1)v-v\|^2 =\|\alpha(1,g)\alpha(x,1)v-\alpha(1,g)v\|^2
		=\|\alpha(x,1)v-v\|^2.
	\end{align*}
	For any $(x,g)\in N\rtimes G$, $$\|\alpha(x,g)v-v\|^2=\|\alpha(x,1)\alpha(1,g)v-v\|^2=\|\alpha(x,1)v-v\|^2=\psi(x).$$ Thus, $$\|\alpha(x,g)(0)\|\le\|\alpha(x,g)(0)-\alpha(x,g)(v)\|+\|\alpha(x,g)v-v\|+\|v\|\le\sqrt{\psi(x)}+2\|v\|$$ and the last quantity is bounded by assumption. \qed
	
	\subsection{Abelian kernel: Invariant Probability Measures}
	
	Theorem \ref{fupoty} gives an equivalent condition for $N\rtimes G$ to have Property (T), when $G$ has Property (T), in terms of $G$-invariant functions of positive type on $N$. Due to availability of spectral theory, in the case $N$ is abelian, it is possible to give an equivalent condition in terms of the space $\mathcal{M}(\widehat{N})^G$ of regular $G$-invariant probability measures on the unitary dual $\widehat{N}$ of $N$. We put the subspace topology on $\mathcal{M}(\widehat{N})$ induced by the inclusion $\mathcal{M}(\widehat{N})\subset C_c(\widehat{N})^*$, where $C_c(\widehat{N})^*$ has the weak${}^*$ topology.
	
	\begin{theorem}\label{prob}
		Let $G$ be a locally compact group having Property (T), which acts continuously by automorphisms on a second countable locally compact abelian group $N$. Let $\delta_1\in\mathcal{M}(\widehat{N})$ be the Dirac mass at $1$. Then $N\rtimes G$ has Property (T) if and only if there is no sequence $\{\mu_n\}\subset\mathcal{M}(\widehat{N})^G$, such that $\mu_n\to\delta_1$ and $\mu_n(1)=0$, for all $n\in\n$.
	\end{theorem}
	
	This result is very similar to \cite[Theorem 5.1]{ioana} (for discrete groups) and \cite[Theorem 1, ($\neg$T)$\Leftrightarrow$(P)]{cortes}. Their result gives an equivalent condition for $(N\rtimes G,N)$ to have relative Property (T) in terms of sequences in $\mathcal{M}(\widehat{N})$. Since they do not assume that $G$ has Property (T), they put an appropriate condition on the sequences that reflects the existence of almost invariant vectors of $G$. Since we assume that $G$ has Property (T), considering $G$-invariant measures is enough for us. The remaining two conditions are the same. The proof of \cite[Theorem 1]{cortes} can be adapted to our case with the necessary modifications provided by Lemma \ref{alinvect}  and Lemma \ref{splrepns}. For the convenience of reader we give a complete proof below. 
	
	We begin with the analogs of Lemma \ref{GNSfupoty} and Lemma \ref{GNSfunety}.
	
	\begin{lemma}\label{splrepns}
		Let $G$ be a topological group that acts continuously by automorphisms on a locally compact abelian group $N$. Given $\mu\in\mathcal{M}(\widehat{N})^G$, there exists a representation $(\Pi,L^2_\mu(\widehat{N}))$ of $N\rtimes G$ such that $\Pi(\{1\}\times G)$ fixes the constant function $1$ in $L^2_\mu(\widehat{N})$.
	\end{lemma}
	
	\begin{proof}
		Consider the set of equivalence classes on the left hand side of the correspondence in Lemma \ref{directint}. In the proof of Lemma \ref{GNSfupoty} we  showed that if the class of $(\pi,V_\pi,v)$ is fixed by $G$, then $\pi$ can be extended to a representation $\Pi$ of $N\rtimes G$ in such a way that $v$ is fixed by $\Pi(\{1\}\times G)$. Thus it is enough to show that the bijective map between the two sets in Lemma \ref{directint} is $G$-equivariant. We need to check that if $\mu$ is the measure corresponding to  $(\pi,V_\pi,v)$ then $g\cdot\mu$, the pushforward of $\mu$ by $g$, is the measure corresponding to  $(g\cdot\pi,V_\pi,v)$. For this we note that the measure $\mu$ is in fact $P_{v,v}$, where $P$ is the $V_\pi$-projection valued measure associated to the representation $(\pi,V_\pi)$ \cite[Theorem 4.45]{folland}. The projection valued measure associated to $(g\cdot\pi,V_\pi)$ is $g\cdot P$. Hence $(g\cdot P)_{v,v}=g\cdot P_{v,v}=g\cdot\mu$. 
	\end{proof}
	
	Next we characterize the existence of a non-zero invariant vector in $L^2_\mu(\widehat{N})$ in terms of the measure $\mu$. This is the reason for putting the condition $\mu_n(\{1\})=0$ in Theorem \ref{prob}. The argument given here is a part of the proof of \cite[Theorem 7]{cortes}.
	
	\begin{lemma}\label{invect}
		Let $N$ be a second countable locally compact group
		and let $\mu$ be a regular probability measure on $\widehat{N}$. Let $(\rho,L^2_\mu(\widehat{N}))$ be the representation of $N$ given by $\rho(x)(f)(\chi):=\chi(x)f(\chi)$. Then $\rho$ has a non-zero invariant vector if and only if $\mu(\{1\})\ne 0$. 
	\end{lemma}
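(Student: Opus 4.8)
The plan is to translate the condition ``$\rho$ has a non-zero invariant vector'' into a pointwise statement about functions in $L^2_\mu(\widehat N)$ and then read off when such a function can exist. First I would unwind the action: a vector $f \in L^2_\mu(\widehat N)$ is $\rho$-invariant exactly when $\chi(x) f(\chi) = f(\chi)$ for every $x \in N$ and $\mu$-almost every $\chi$. Fixing $x$, the set $\{\chi : \chi(x) \neq 1\}$ forces $f(\chi) = 0$ there (up to a $\mu$-null set); taking a countable dense subset $\{x_k\}$ of $N$ (available since $N$ is second countable) and using continuity of $\chi \mapsto \chi(x)$, the union of these sets is precisely $\{\chi : \chi \neq 1\} = \widehat N \setminus \{1\}$. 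Hence an invariant $f$ must vanish $\mu$-a.e. off $\{1\}$, so $f$ is (a.e.) a scalar multiple of the indicator function $\mathbf{1}_{\{1\}}$.

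From this the equivalence is immediate. If $\mu(\{1\}) \neq 0$, then $\mathbf{1}_{\{1\}}$ is a nonzero element of $L^2_\mu(\widehat N)$ (its norm is $\mu(\{1\})^{1/2} > 0$), and it is visibly $\rho$-invariant since $1(x) = 1$ for all $x$; so a nonzero invariant vector exists. Conversely, if $\mu(\{1\}) = 0$, then any invariant $f$ equals a multiple of $\mathbf{1}_{\{1\}}$ in $L^2_\mu(\widehat N)$, but $\mathbf{1}_{\{1\}}$ is the zero element of $L^2_\mu(\widehat N)$ (supported on a $\mu$-null set), so $f = 0$; thus there is no nonzero invariant vector.

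The one point requiring a little care — the main (mild) obstacle — is the passage from ``for each $x$, $f$ vanishes a.e. on $\{\chi(x)\neq 1\}$'' to ``$f$ vanishes a.e. on $\widehat N\setminus\{1\}$'': a priori this is an uncountable family of null sets. The fix is exactly the second-countability hypothesis on $N$: pick a countable dense $\{x_k\}\subset N$, let $E_k = \{\chi : \chi(x_k)\neq 1\}$, and note each $f$ vanishes $\mu$-a.e. on $E_k$ so it vanishes $\mu$-a.e. on the countable union $\bigcup_k E_k$. Finally $\bigcup_k E_k = \widehat N \setminus \{1\}$: if $\chi\neq 1$ then $\chi(x)\neq 1$ for some $x\in N$, hence $\chi(x_k)\neq 1$ for some $x_k$ near $x$ by continuity of the pairing. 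This closes the argument.
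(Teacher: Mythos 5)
Your proposal is correct and follows essentially the same route as the paper: the forward direction exhibits $\mathbf{1}_{\{1\}}$ as an invariant vector, and the reverse direction shows any invariant $f$ vanishes $\mu$-a.e.\ off $\{1\}$ by reducing the uncountable family of null sets to a countable one via second countability of $N$. The only cosmetic difference is that you extract the countable family from a dense subset of $N$ together with continuity of characters, while the paper invokes second countability to pass directly to a countable subfamily of the open cover $\{\chi:\chi(x)\neq 1\}$ of $\widehat N\setminus\{1\}$; both are valid and equivalent in substance.
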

	\begin{proof}
		($\Leftarrow$) The characteristic function $\mathbb{I}_{\{1\}}$ is a non-zero invariant vector. 
		
		\noindent $(\Rightarrow)$ Suppose $\mu(\{1\})=0$. Let $f$ be an invariant vector. We wish to show that $f=0$ $\mu$-almost everywhere. That is, if $Z:=\{\chi\in\widehat{G}:f(\chi)\ne 0\}$, then we wish to show that $\mu(Z)=0$.  Invariance of $f$ means that for any $x\in N$, the function  $\chi\mapsto\chi(x)f(\chi)-f(\chi)$ vanishes $\mu$-almost everywhere. Hence there exists measurable $E_x\subset\widehat{G}$, with $\mu(E_x)=1$, such that $(\chi(x)-1)f(\chi)=0$ for all $\chi\in E_x$. Let $K_x$ be the closed set $\{\chi\in\widehat{G}:\chi(x)=1\}$. Then $\chi\in E_x\cap K^c_x$ implies $f(\chi)\ne 0$, that is, $\chi\in Z^c$. Hence $Z\subset E_x^c\cup K_x$ for all $x\in N$.  Since $\chi\ne 1$ means that there exists $x\in N$ such that $\chi(x)\ne 1$, we have $\{1\}^c=\cup_{1\ne x\in N}K_x^c$. Since $N$ is second countable, there exists a countable subset $\{x_i\}_i\subset N$ such that $\{1\}^c=\cup_iK_{x_i}^c$. Therefore by hypothesis, $\mu(\cap_i K_{x_i})=\mu(\{1\})=0$. Since  $$Z\subset\cap_i(E_{x_i}^c\cup K_{x_i})\subset\cap_i((\cup_iE_{x_i}^c)\cup K_{x_i})=(\cup_iE_{x_i}^c)\cup (\cap_iK_{x_i}),$$
		we have $$\mu(Z)\le\sum_i\mu(E_{x_i}^c)+\mu(\cap_iK_{x_i})=0,$$ that is, $\mu(Z)=0$.   
	\end{proof}
	
	The next result reformulates the convergence $\mu_n\to \delta_1$ in weak${}^*$ topology on $\mathcal{M}(\widehat{N})\subset C_c(\widehat{N})^*$ in terms of integration of the evaluation maps at each $x\in N$.
	
	\begin{lemma}\label{ev}\cite[Lemma 11]{cortes}
		If $N$ is $\sigma$-compact, then $\mu_n\to \delta_1$ in weak${}^*$ topology if and only if for all $x\in N$, $\int_{\widehat{N}}\chi(x)d\mu(\chi)\to 1$, uniformly on compact subsets of $N$.
	\end{lemma}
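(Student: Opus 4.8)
\noindent\textit{Sketch of proof of Lemma~\ref{ev}.}
For a regular probability measure $\mu$ on $\widehat{N}$ put $\Phi_\mu(x):=\int_{\widehat{N}}\chi(x)\,d\mu(\chi)$; this is exactly the normalized function of positive type $x\mapsto\langle\rho(x)1,1\rangle_{L^2_\mu(\widehat{N})}$ attached to $\mu$ (with $\rho$ as in Lemma~\ref{invect}), and $\Phi_{\delta_1}\equiv 1$. Writing $\Phi_n:=\Phi_{\mu_n}$, the claim reads: $\mu_n\to\delta_1$ against $C_c(\widehat{N})$ if and only if $\Phi_n\to 1$ uniformly on compact subsets of $N$. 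Since $N$ is second countable and locally compact, so is $\widehat{N}$; in particular $\widehat{N}$ is a Polish locally compact abelian group, Prokhorov's theorem applies to $\mathcal{M}(\widehat{N})$, and a finite measure on $\widehat{N}$ is determined by $\mu\mapsto\Phi_\mu$ (Pontryagin--Bochner uniqueness). The whole argument runs through one notion, \emph{uniform tightness} of $\{\mu_n\}$, which is forced in either direction and which mediates between the three relevant topologies: weak${}^*$ against $C_c(\widehat{N})$, weak against $C_b(\widehat{N})$, and uniform-on-compacts for the $\Phi_n$.

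($\Rightarrow$) I would first prove uniform tightness. Fix $\epsilon>0$ and choose (Urysohn) $g\in C_c(\widehat{N})$ with $0\le g\le 1$ and $g\equiv 1$ near the trivial character $1$; then $\int g\,d\mu_n\to g(1)=1$, so $\mu_n(\mathrm{supp}\,g)\ge\int g\,d\mu_n>1-\epsilon$ for all large $n$, while the finitely many remaining $\mu_n$ are individually tight on the Polish space $\widehat{N}$; a finite union of compacta yields a single compact $C_\epsilon$ with $\mu_n(\widehat{N}\setminus C_\epsilon)<\epsilon$ for all $n$. Uniform tightness, together with the fact that a Radon measure is determined by its integrals against $C_c(\widehat{N})$ and Prokhorov's theorem, promotes the hypothesis to weak convergence $\int h\,d\mu_n\to h(1)$ for every $h\in C_b(\widehat{N})$; taking $h=\chi\mapsto\chi(x)$ gives $\Phi_n(x)\to 1$ for each fixed $x$. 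For uniformity on a compact $K\subset N$, I would check that $\{\Phi_n\}$ is uniformly equicontinuous: from $\chi(x)-\chi(y)=\chi(y)\bigl(\chi(xy^{-1})-1\bigr)$,
\[
|\Phi_n(x)-\Phi_n(y)|\le\int_{\widehat{N}}|\chi(xy^{-1})-1|\,d\mu_n(\chi)\le\sup_{\chi\in C_\epsilon}|\chi(xy^{-1})-1|+2\epsilon ,
\]
and the supremum tends to $0$ as $xy^{-1}\to e$ by joint continuity of $\widehat{N}\times N\to\mathbb{T}$ and compactness of $C_\epsilon$. Uniform equicontinuity plus pointwise convergence forces uniform convergence on $K$.

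($\Leftarrow$) Here too the heart of the matter is uniform tightness, now extracted from $\Phi_n\to 1$ near $e$. Pick a symmetric precompact neighbourhood $W$ of $e$ and let $\lambda$ be the probability measure on $N$ obtained by convolving normalized Haar measure on $W$ with itself; then $\widehat{\lambda}(\chi):=\int_N\chi(x)\,d\lambda(x)$ lies in $C_0(\widehat{N})$ (Riemann--Lebesgue) and satisfies $0\le\widehat{\lambda}\le 1$ and $\widehat{\lambda}(1)=1$. By Fubini, $\int_N\Phi_n(x)\,d\lambda(x)=\int_{\widehat{N}}\widehat{\lambda}(\chi)\,d\mu_n(\chi)$, and the left side tends to $\int_N 1\,d\lambda=1$ because $\Phi_n\to 1$ uniformly on the compact set $\mathrm{supp}\,\lambda$; hence $\int_{\widehat{N}}(1-\widehat{\lambda})\,d\mu_n\to 0$. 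Markov's inequality then bounds $\mu_n(\{\widehat{\lambda}<1-\epsilon\})$, and $\{\widehat{\lambda}\ge 1-\epsilon\}$ is compact (a nonnegative $C_0$ function exceeds a positive level only on a compact set), so uniform tightness follows as in the previous paragraph. By Prokhorov, every subsequence of $\{\mu_n\}$ has a further subsequence converging weakly to some probability measure $\nu$; testing against $\chi\mapsto\chi(x)$ gives $\Phi_\nu(x)=\lim_k\Phi_{n_k}(x)=1$ for all $x$, so $\nu=\delta_1$ by uniqueness of the Fourier transform. Since every subsequence has a sub-subsequence converging to $\delta_1$, the whole sequence converges to $\delta_1$ weakly, a fortiori in the weak${}^*$ topology of $C_c(\widehat{N})^*$.

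There is no single hard step; everything hinges on the two uniform-tightness arguments, i.e.\ on ruling out escape of mass. In ($\Rightarrow$) this is precisely where one uses that $\delta_1$ and every $\mu_n$ is a \emph{probability} measure (vague convergence by itself permits mass to leak to infinity); in ($\Leftarrow$) it is the Fourier-analytic estimate above, which is the abelian-group analogue of the classical bound of a measure's tail by the size of $1-\widehat{\mu}$ near the origin. Once tightness is available, Prokhorov, an Arzel\`a--Ascoli argument, and Pontryagin--Bochner uniqueness finish the proof routinely; the only bookkeeping nuisance is absorbing the finitely many initial $\mu_n$ via their individual tightness, which is where metrizability and $\sigma$-compactness of $\widehat{N}$ enter.
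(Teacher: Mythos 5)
Your argument is correct and complete. Note that the paper itself gives no proof of Lemma \ref{ev}: it is quoted verbatim from Cornulier--Tessera \cite[Lemma 11]{cortes}, so there is no in-text proof to compare against; what you have written is a valid self-contained substitute. Both implications are sound. In ($\Rightarrow$), the passage from vague convergence to convergence against $C_b(\widehat{N})$ via the Urysohn function $g$ correctly exploits that all the $\mu_n$ and the limit are probability measures (this is exactly where mass could otherwise escape), the individual tightness of the finitely many initial terms is available because the $\mu_n$ are regular and $\widehat{N}$ is second countable, and the equicontinuity estimate using joint continuity of the pairing $N\times\widehat{N}\to\mathbb{T}$ on $C_\epsilon\times\{xy^{-1}\}$ legitimately upgrades pointwise to locally uniform convergence. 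In ($\Leftarrow$), the choice of $\lambda=m_W*m_W$ with $W$ symmetric is the right device: symmetry makes $\widehat{m_W}$ real so $\widehat{\lambda}=\widehat{m_W}^2\ge 0$, Riemann--Lebesgue puts $\widehat{\lambda}$ in $C_0(\widehat{N})$ so that the superlevel sets $\{\widehat{\lambda}\ge 1-\epsilon\}$ are compact, and the Fubini identity plus Markov then yield uniform tightness; Prokhorov and uniqueness of the Fourier--Stieltjes transform (valid here by Pontryagin duality) finish the argument. Two very small points worth making explicit if you write this up: you need $\epsilon<1$ for $\{\widehat{\lambda}\ge 1-\epsilon\}$ to be compact rather than all of $\widehat{N}$, and in the Fubini step $\mathrm{supp}\,\lambda\subset\overline{W}\,\overline{W}$ is compact because $W$ is precompact, which is what licenses the use of the uniform-on-compacta hypothesis (though dominated convergence with $|\Phi_n|\le 1$ would also do). Neither affects correctness.
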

	
	Now we prove the main result of this subsection.
	
	\vspace{.2cm}
	
	\noindent\textit{Proof of Theorem \ref{prob}.}
	($\Rightarrow$) Let $\{\mu_n\}$ be a sequence in $\mathcal{M}(\widehat{N})^G$ such that $\mu_n\to\delta_1$ and $\mu_n(\{1\})=0$, for all $n\in\n$. We will show that $N\rtimes G$ does not have Property (T). For that we will produce a unitary representation $\Pi$ of $N\rtimes G$, which has almost invariant vectors but no invariant vector. By Lemma \ref{splrepns}, for each $n\in\n$, there exists a representation $(\Pi_n,L^2_{\mu_n}(\widehat{N}))$ of $N\rtimes G$  such that $\Pi_n(\{1\}\times G)1=1$. Let $\Pi:=\oplus_n\Pi_n$. By Lemma \ref{invect},  $\Pi_n$ does not have a non-zero $N$-invariant vector. Hence $\Pi$ does not have a non-zero $N$-invariant vector. In particular, $\Pi$ does not have a non-zero $N\rtimes G$ invariant vector. On the other hand we have
	\begin{equation*}
		\|\Pi_n(x,1)(1)-1\|^2=\int_{\widehat{N}}(\chi(x)-1)(\overline{\chi(x)}-1)d\mu_n(\chi)=2\text{Re}(1-\int_{\widehat{N}}\chi(x)d\mu_n(\chi)).
	\end{equation*}
	Since $\mu_n\to\delta_1$, Lemma \ref{ev} implies that the right hand side of the above equation converges to $0$ uniformly on compact subsets of $N$ as $n\to\infty$. Together with the fact that $1\in L^2_{\mu_n}(\widehat{G})$ is $\{1\}\times G$ invariant, this implies that $\Pi=\oplus_n\Pi_n$ has almost invariant vectors. 
	
	\noindent ($\Leftarrow$) Given a representation $\pi$ of $N\rtimes G$ which has almost invariant vectors, but no invariant vector, we will produce a sequence $\{\mu_n\}$ of regular $G$-invariant probability measures on $\widehat{N}$ such that $\mu_n\to\delta_1$ and $\mu_n(\{1\})=0$, for all $n\in\n$. Let $K_1\subset K_2\subset\cdots$ be a sequence of compact sets such that $\cup_iK_i=N$. By Lemma \ref{alinvect}, for each $n$, there exists a unit vector $v_n\in V_\pi^G$ which is $(K_n,1/n)$-invariant. Consider the cyclic subrepresentation $\pi_n$ of $\pi|_N$ generated by $v_n$. Via the correspondence defined in Lemma \ref{directint}, there exists a regular probability measure $\mu_n$ such that the pair $(\pi_n,v_n)$ is equivalent to the pair $(\rho_n,1)$, where $(\rho_n,L^2_{\mu_n}(\widehat{N}))$ is the representation given by $(\rho_n(x)f)(\chi):=\chi(x)f(\chi)$. Moreover $\pi(g)$ intertwines $g\cdot\pi_n$ and $\pi_n$, while sending $v_n$ to $v_n$. Thus the class of $(\pi_n,v_n)$ is a fixed point under $G$-action, hence $\mu_n\in\mathcal{M}(\widehat{N})^G$. Since $\pi$ has no invariant vector, $\pi_n$ has no invariant vector and hence $\rho_n$ has no invariant vector. Therefore, by Lemma \ref{invect}, $\mu_n(\{1\})=0$. On the other hand \begin{equation*}
		1/n^2>\|\pi(x)v_n-v_n\|^2=\|\rho_n(x)1-1\|^2=\int_{\widehat{N}}|\chi(x)-1|^2d\mu_n(\chi),
	\end{equation*}
	for all $x\in K_n$. This implies that $\int_{\widehat{N}}\chi(x)d\mu_n(\chi)\to 1$ uniformly on compact subsets of $N$. Hence, by Lemma \ref{ev}, $\mu_n\to\delta_1$ in the weak${}^*$ topology.   \qed

	\section{Finitely generated abelian and nilpotent kernels}\label{sec-special}
	
	We continue studying the question whether $N\rtimes G$ has Property (T) given $G$ has Property (T). We assume now that $N$ is a finitely generated abelian group. Let $N_f$ and $N_t$ denote the free and torsion parts of $N$ respectively. Since $G$ acts on $N_f$ and $N_t$ separately, we have $N\rtimes G\cong(N_f\times N_t)\rtimes G\cong(N_f\rtimes G)\times(N_f\rtimes G)$. Since $N_t$ is finite, $N_t\rtimes G$ has Property (T). Thus $N\rtimes G$ has Property (T) if and only if $N_f\rtimes G$ has Property (T).
	%Thus, when $N$ is finitely generated abelian, the question of $N\rtimes G$ having Property (T) depends only on the rank of $N$.
	If rank of $N$ is $n$, then $N_f\cong\z^n$ and $\text{Aut}(N_f)\cong\text{GL}(n,\z)$. With this identification, if $\phi$ denotes the action of $G$ on $N_f$, then $\phi(G)\subset\text{GL}(n,\z)$. 
	
	The next general lemma tells us that it is the image $\phi(G)$ of $G$ that matters in determining whether $N\rtimes G$ has Property (T). 
	
	\begin{lemma}\label{img}
		Let $G$ be a group which has Property (T) and $\phi:G\to\emph{Aut}(N)$ be an action of $G$ on $N$. Then $N\rtimes G$ has Property (T) if and only if $N\rtimes \phi(G)$ has Property (T).
	\end{lemma}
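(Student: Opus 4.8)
The plan is to exhibit $N\rtimes\phi(G)$ as a quotient of $N\rtimes G$ by (a copy of) $\ker\phi$, and then to read off the equivalence from Lemma \ref{extension}, the point being that the relative Property (T) appearing there is automatic once $G$ has Property (T). So I would set $K=\ker\phi\trianglelefteq G$ and regard $N$, $G$, $K$ as the subgroups $\{(n,1)\}$, $\{(1,g)\}$, $\{(1,k)\}$ of $N\rtimes G$ (with $(n_1,g_1)(n_2,g_2)=(n_1\,\phi(g_1)(n_2),g_1g_2)$).

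The first thing to check is that the copy of $K$ is normal in $N\rtimes G$. For $(n,g)\in N\rtimes G$ and $k\in K$, normality of $K$ in $G$ gives $gkg^{-1}\in K=\ker\phi$, so a one-line computation yields
\[
(n,g)(1,k)(n,g)^{-1}=\bigl(n\,\phi(gkg^{-1})(n^{-1}),\,gkg^{-1}\bigr)=(1,gkg^{-1}),
\]
which lies in the copy of $K$. Since $K$ acts trivially on $N$, the action $\phi$ factors through $G/K\cong\phi(G)$, and the natural projection $N\rtimes G\to N\rtimes\phi(G)$ is surjective with kernel exactly this copy of $K$; hence there is a short exact sequence
\[
1\longrightarrow K\longrightarrow N\rtimes G\longrightarrow N\rtimes\phi(G)\longrightarrow 1.
\]

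Now I would apply Lemma \ref{extension}: $N\rtimes G$ has Property (T) if and only if $N\rtimes\phi(G)$ has Property (T) \emph{and} the pair $(N\rtimes G,\,K)$ has relative Property (T). It remains to see that the latter always holds. Because $G$ has Property (T), the pair $(N\rtimes G,\,G)$ has relative Property (T): any unitary representation of $N\rtimes G$ with almost invariant vectors restricts to a representation of the subgroup $G$ with almost invariant vectors, which therefore carries a nonzero $G$-invariant vector; such a vector is a fortiori invariant under $K\le G$, so $(N\rtimes G,\,K)$ has relative Property (T). Feeding this back into Lemma \ref{extension} gives exactly the asserted equivalence.

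The only non-formal step is the normality of the copy of $K$ in $N\rtimes G$ together with the identification of the quotient as $N\rtimes\phi(G)$; everything else is a direct invocation of Lemma \ref{extension} and the standard fact that Property (T) of $G$ forces relative Property (T) of $(G,L)$ for every subgroup $L$. (In a locally compact setting one should additionally note that $\ker\phi$ is closed, so that the displayed sequence is an extension of locally compact groups, as Lemma \ref{extension} requires.) I do not expect any genuine obstacle here.
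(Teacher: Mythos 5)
Your proof is correct and follows essentially the same route as the paper: the same short exact sequence $1\to\{1\}\times\ker\phi\to N\rtimes G\to N\rtimes\phi(G)\to 1$, with relative Property (T) of $(N\rtimes G,\{1\}\times\ker\phi)$ obtained for free from $\ker\phi\le G$ and $G$ having Property (T), then Lemma \ref{extension}. The only difference is cosmetic — you invoke the full "if and only if" of Lemma \ref{extension} for both directions and spell out the normality computation, whereas the paper handles the forward direction by simply noting that quotients of Property (T) groups have Property (T).
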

	
	\begin{proof}
		Since $N\rtimes\phi(G)$ is a quotient of $N\rtimes G$, therefore $N\rtimes G$ having Property (T) implies the same for $N\rtimes\phi(G)$. On the other hand suppose $N\rtimes\phi(G)$ has Property (T). Consider the short exact sequence
		\begin{equation*}
			1\to \{1\}\times\text{ker}(\phi)\to N\rtimes G\to N\rtimes\phi(G)\to 1.
		\end{equation*}
		Since $\{1\}\times\text{ker}(\phi)\subset\{1\}\times G\subset N\rtimes G$, with $G$ having Property (T), therefore $(N\rtimes G,\{1\}\times\text{ker}(\phi))$ has relative Property (T). Thus by Lemma \ref{extension}, $N\rtimes G$ has Property (T).
	\end{proof}
	\begin{cor}\label{imgnonT}
		With notation as in Lemma \ref{img}, if $\phi(G)$ is finite and $N$ does not have Property (T), then $N\rtimes G$ does not have Property (T).
	\end{cor}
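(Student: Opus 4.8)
The plan is to reduce immediately to the finite group $\phi(G)$ via Lemma \ref{img} and then use that Property (T) is insensitive to passing to finite-index subgroups. First, by Lemma \ref{img}, $N\rtimes G$ has Property (T) if and only if $N\rtimes\phi(G)$ has Property (T); so it suffices to prove that $N\rtimes\phi(G)$ does not have Property (T).

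The key observation is that $N$ sits inside $N\rtimes\phi(G)$ as a closed normal subgroup of finite index $|\phi(G)|$; in particular it is an open, cocompact subgroup. I would then invoke the standard fact (see \cite{BdlHV}) that for a locally compact group, Property (T) passes both to and from any finite-index (equivalently, open cocompact) subgroup. Applying this to the inclusion $N\le N\rtimes\phi(G)$: if $N\rtimes\phi(G)$ had Property (T), then $N$ would too, contradicting the hypothesis. Hence $N\rtimes\phi(G)$, and therefore $N\rtimes G$, fails Property (T). Alternatively, one can bypass the finite-index fact and argue directly from Lemma \ref{extension} applied to $1\to N\to N\rtimes\phi(G)\to\phi(G)\to 1$: since $\phi(G)$ is finite it has Property (T), so $N\rtimes\phi(G)$ has Property (T) if and only if $(N\rtimes\phi(G),N)$ has relative Property (T), and since $N$ is cocompact of finite index the latter is equivalent to Property (T) of $N$ itself.

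There is no real obstacle here; the corollary is essentially a formal consequence of Lemma \ref{img} together with the behaviour of Property (T) under finite-index inclusions. The only point that warrants a word of care is that we are working with locally compact groups rather than discrete ones, so the relevant statement is the one about open cocompact subgroups; this is exactly what the cited results in \cite{BdlHV} provide.
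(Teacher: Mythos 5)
Your argument is correct and is essentially the paper's proof: reduce to $N\rtimes\phi(G)$ via Lemma \ref{img}, note $N$ has finite index there, and invoke the stability of Property (T) under passing to and from finite-index subgroups. The alternative route through Lemma \ref{extension} is a fine variant but not needed.
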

	\begin{proof}
		By the above lemma, it is enough to prove that $\phi(G)\rtimes N$ does not have Property (T). If $\phi(G)$ is finite then $N$ is a finite index subgroup of $N\rtimes\phi(G)$. This implies that $N\rtimes\phi(G)$ has Property (T) if and only if $N$ has Property (T).  Since $N$ does not have Property (T), nor does $\phi(G)\rtimes N$.
	\end{proof}

	We need a fact that we now quote. It is a special cases of the following  result.
	% \begin{theorem}\label{haagerup}\cite[Theorem 2]{cor}
	% 	Any subgroup of $\emph{SL}(2,\z)$ has Haagerup property.
	% \end{theorem}  
	Let $H,N$ be subgroups of a group $G$. Then the triple $(G,H,N)$ is said to have relative Property (T) if any unitary representation of $G$, whose restriction to $H$ has almost invariant vectors, has an $N$ invariant vector. Note that if $(G,H,N)$ has relative Property (T), then so does the pair $(G,N)$.
	
	\begin{prop}\label{dichotomy}\cite[Corollary 3.1]{raja}
		Let $G$ be a subgroup of $\emph{GL}(n,\r)$. If $\r^n$ is $G$-irreducible then either $(\r^n\rtimes G,G,\r^n)$ has relative Property (T) or $G$ is contained in a compact extension of a diagonalizable group.
	\end{prop}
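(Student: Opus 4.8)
The two alternatives are bridged by a single object: a $G$-invariant probability measure on the projective space $\mathbb{P}(\widehat{\r^n})$ of the Pontryagin dual. The plan is to establish the contrapositive of the first alternative: assuming $(\r^n\rtimes G, G, \r^n)$ does \emph{not} have relative Property (T), one produces such a measure, and then uses irreducibility to force $G$ into a compact extension of a diagonalizable group.

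\emph{Step 1: from failure of relative (T) to an invariant projective measure.} Failure of relative Property (T) for the triple provides a unitary representation $(\pi,\mathcal{H})$ of $\r^n\rtimes G$ with a sequence $\xi_k$ of $(\{1\}\times G)$-almost invariant unit vectors but no nonzero $\r^n$-invariant vector. Apply Stone's theorem to $\pi|_{\r^n}$: there is a projection-valued measure $P$ on $\widehat{\r^n}\cong\r^n$ with $\pi(x,1)=\int_{\widehat{\r^n}}\chi(x)\,dP(\chi)$, and the relation $\pi(1,g)\pi(x,1)\pi(1,g)^{-1}=\pi(\phi(g)x,1)$ becomes $\pi(1,g)P(E)\pi(1,g)^{-1}=P(g\cdot E)$ for the contragredient $G$-action on $\widehat{\r^n}$. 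Since the range of $P(\{1\})$ is exactly the space of $\r^n$-invariant vectors (cf.\ Lemma \ref{invect}, in direct-integral language), the hypothesis gives $P(\{1\})=0$, so the scalar probability measures $\mu_k(\cdot):=\langle P(\cdot)\xi_k,\xi_k\rangle$ are supported away from the trivial character and push forward to probability measures $\bar\mu_k$ on $\mathbb{P}(\widehat{\r^n})$. The estimate $|\mu_k(g\cdot E)-\mu_k(E)|=|\langle P(E)\pi(1,g^{-1})\xi_k,\pi(1,g^{-1})\xi_k\rangle-\langle P(E)\xi_k,\xi_k\rangle|\le 2\,\|\pi(1,g^{-1})\xi_k-\xi_k\|\to 0$ shows that the $\bar\mu_k$ are asymptotically $G$-invariant, so a weak-$*$ subsequential limit $\bar\mu$ on the compact space $\mathbb{P}(\widehat{\r^n})$ is a genuine $G$-invariant probability measure. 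Note that $\widehat{\r^n}$ is $G$-irreducible, being the dual of the irreducible module $\r^n$.

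\emph{Step 2: structure theory via Furstenberg's lemma.} The engine is Furstenberg's lemma: a probability measure on $\mathbb{P}(V)$ that is not supported on any finite union of proper linear subspaces has compact stabilizer in $\mathrm{PGL}(V)$. Let $d$ be the least dimension of a linear subspace $W\subseteq\widehat{\r^n}$ with $\bar\mu(\mathbb{P}(W))>0$. If $d=n$, then $\bar\mu$ is supported on no finite union of proper subspaces, so $G$ has precompact image in $\mathrm{PGL}_n(\r)$; lifting, $G\subseteq\r_{>0}\cdot K$ with $K$ a conjugate of $\mathrm{O}(n)$ — a compact extension of the diagonalizable scalar group $\r_{>0}$ — and we are done. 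If $d<n$, let $W_1,\dots,W_m$ be the $d$-dimensional subspaces attaining the maximal value $c=\max\{\bar\mu(\mathbb{P}(W)):\dim W=d\}>0$; by minimality of $d$ the sets $\mathbb{P}(W_i)$ have $\bar\mu$-null pairwise intersections, hence there are finitely many, and $G$-invariance of $\bar\mu$ makes $\{W_1,\dots,W_m\}$ a $G$-invariant family. Thus $W_1+\dots+W_m$ is a nonzero $G$-invariant subspace, equal to $\widehat{\r^n}$ by irreducibility. Pass to the finite-index subgroup $G_0\le G$ fixing each $W_i$; restriction gives an injection $G_0\hookrightarrow\prod_i\mathrm{GL}(W_i)$ (as the $W_i$ span), and minimality of $d$ forces $\bar\mu|_{\mathbb{P}(W_i)}$ to charge no proper subspace of $W_i$, so Furstenberg's lemma applied inside each $\mathrm{PGL}(W_i)$ yields $G_0|_{W_i}\subseteq\r_{>0}\cdot K_i$ with $K_i$ compact. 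In particular $G_0$, and its closure in $\mathrm{GL}_n(\r)$, contains no nontrivial unipotent element and consists of semisimple elements; from this, together with the standard structure theory of closed subgroups of $\mathrm{GL}_n(\r)$ (reductive identity component, compact semisimple part, split central torus conjugate into diagonal matrices, normalized by the compact part), one concludes that $G_0$, and therefore the finite extension $G$, is contained in a compact extension of a diagonalizable group.

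\emph{Main obstacle.} Step 1 is routine once one observes that $P(\{1\})=0$ is precisely what lets the $\mu_k$ descend to projective space. The real content is Step 2: invoking (and, for a self-contained account, proving from the contraction dynamics on $\mathbb{P}(V)$) Furstenberg's lemma, and then carrying out the somewhat delicate bookkeeping that passes from the finite-index subgroup $G_0$ and the spanning — but in general not independent — family $\{W_i\}$ to a statement about $G$ itself; the clean route is the observation that the hypotheses eliminate all nontrivial unipotents, after which the structure theory of real Lie groups applies. Alternatively, one simply quotes \cite[Corollary 3.1]{raja}, as is done in the text.
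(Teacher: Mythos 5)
The paper offers no proof of this proposition: it is imported verbatim as \cite[Corollary 3.1]{raja} and used as a black box in Proposition \ref{fingenab}, so there is no internal argument to compare yours against. Your reconstruction follows what is in fact the standard route to such dichotomies (Burger, de Cornulier, Raja): negate relative Property (T), use the SNAG/spectral theorem to turn the almost-$G$-invariant vectors into asymptotically $G$-invariant probability measures on $\widehat{\r^n}$, use $P(\{1\})=0$ to push them to $\mathbb{P}(\widehat{\r^n})$, extract an invariant limit measure, and feed it to Furstenberg's lemma. Step 1 and the first half of Step 2 (the total-variation estimate, the finiteness and $G$-invariance of the family of $d$-dimensional subspaces of maximal measure, the use of irreducibility to make them span) are correct and complete modulo the usual net-versus-sequence caveat when $G$ is not second countable.

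Two points in the final assembly deserve a warning. First, the proposed shortcut ``$G_0$ has no nontrivial unipotents, hence consists of semisimple elements, hence structure theory of closed subgroups applies'' is not sound as stated: semisimplicity is not a closed condition in $\mathrm{GL}_n(\r)$ (limits of semisimple elements can be unipotent), so one cannot pass to the closure of $G_0$ and invoke the structure of closed subgroups from that property alone. The repair is already implicit in your blockwise argument: $\r_{>0}\cdot K_i$ is a \emph{closed} subgroup of $\mathrm{GL}(W_i)$, so the closure of $G_0$ maps into $\prod_i\r_{>0}\cdot K_i$, which is visibly a compact extension of a diagonalizable group; use that rather than semisimplicity of individual elements. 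Second, the descent from the finite-index subgroup $G_0$ back to $G$ is not automatic (a finite union of cosets of a compact-by-diagonalizable group is not a group); the clean fix is to work throughout with the full stabilizer in $G$ of the finite $G$-invariant family $\{W_1,\dots,W_m\}$ and to enlarge the ambient compact-by-diagonalizable group to one normalized by the permutation action. Neither issue is fatal, but both are exactly the ``delicate bookkeeping'' you defer, and for the purposes of this paper the intended reading is simply the citation of \cite[Corollary 3.1]{raja}.
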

	%So from now on we may assume that $N$ is torsion free. Thus $N$ is isomorphic to $\z^n$, for some $n$. Note that $\text{Aut}(\z^n)=\text{GL}(n,\z)$.
	
	Now we state our main result in the case where $N$ is finitely generated abelian. 
	
	\begin{prop}\label{fingenab}
		Let $G$ be a group with Property (T) which acts by automorphisms on a finitely generated abelian group $N$ of rank $n$.
		%Let $f:G\to H$ be a surjective group homomorphism whose kernel $N$ is a finitely generated abelian subgroup of rank $n$.
		Let $\phi:G\to\emph{GL}(n,\z)$ be the action of $G$ on the free part of $N$.  
		\begin{enumerate}
			\item If $n=1$ or $2$, then $N\rtimes G$ does not have Property (T).
			\item If $n>2$ and $\phi(G)$ is Zariski dense in $\emph{SL}(n,\r)$, then $N\rtimes G$ has Property (T).
		\end{enumerate}
	\end{prop}
	
	\begin{proof}
		(1) By the discussion at the beginning of this section we may assume $N=\z$ or $\z^2$. If $N=\z$, then $\text{Aut}(N)=\text{GL}(1,\z)$ is finite and hence so is $\phi(G)$. Thus Corollary \ref{imgnonT} applies. If $N=\z^2$, then $\text{Aut}(N)=\text{GL}(2,\z)$. The group $\text{GL}(2,\z)$ has Haagerup property since the free group with two generators is a finite index subgroup of $\text{GL}(2,\z)$. Therefore  $\phi(G)$ also has the Haagerup property. On the other hand $\phi(G)$, being a quotient of a Property (T) group, has Property (T). Therefore $\phi(G)$ is finite. Again Corollary \ref{imgnonT} applies. 
		
		\vspace{.1cm}
		
		\noindent (2) By Lemma \ref{img}, it is enough to prove that $\z^n\rtimes\phi(G)$ has Property (T). Further, since $(\r^n\rtimes\phi(G))/(\z^n\rtimes\phi(G))\cong\r^n/\z^n$ is compact, it is enough to prove that $\r^n\rtimes\phi(G)$ has Property (T).  Since $\phi(G)$ is Zariski dense in $\text{SL}(n,\r)$ and $\r^n$ is $\text{SL}(n,\r)$-irreducible, therefore $\r^n$ is also $\phi(G)$-irreducible. Hence Proposition \ref{dichotomy} applies. If $\phi(G)$ is contained in a compact extension of a diagonalizable group, then it is amenable, since closed subgroups of amenable groups are amenable. On the other hand $\phi(G)$ has Property (T), hence $\phi(G)$ is finite. But finite subgroups are not Zariski dense. Hence $(\r^n\rtimes\phi(G),\r^n)$ has relative Property (T). Since $\phi(G)$ has Property (T), therefore by Lemma \ref{extension}, $\r^n\rtimes\phi(G)$ has Property (T).
	\end{proof}
	
	\begin{remark}
		In the proof of (2), we only used $n\ge 2$. But when $n=2$, the Haagerup property of $\text{GL}(2,\z)$ excludes the possibility of a homomorphism $\phi$ from a Property (T) group to $\text{GL}(2,\z)$ having Zariski dense image.	
	\end{remark}
	
	%\textcolor{blue}{State \cite[Theorem 1.2]{cms} below. Write a proof for proposition \ref{prop-nilp}.}
	Any result on relative Property (T) with abelian normal subgroup can be upgraded to one involving nilpotent normal subgroups via the following result. We denote the abelianization of $N$ by $N_{ab}$.
	
	\begin{theorem}\cite[Theorem 1.2]{cms}\label{thm-cms}
		Let $N$ be a closed nilpotent normal subgroup of a locally compact group $G$. Then $(G,N)$ has relative Property (T) if and only if $(G/[N,N],N_{ab})$ has relative Property (T). 
	\end{theorem}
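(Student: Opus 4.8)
The plan is to prove the two implications separately; only the harder one uses nilpotency. For ``$(G,N)$ has relative Property (T) $\Rightarrow$ $(G/[N,N],N_{ab})$ has relative Property (T)'' I would argue directly: given a unitary representation $\bar\pi$ of $\bar G:=G/[N,N]$ with almost invariant vectors, pull it back to $\pi:=\bar\pi\circ q$ along the quotient map $q\colon G\to\bar G$; since $q$ sends compact sets to compact sets, $\pi$ still has almost invariant vectors, so relative Property (T) of $(G,N)$ produces a nonzero $N$-invariant vector $\xi$, and because $\pi(N)=\bar\pi(q(N))=\bar\pi(N_{ab})$ this $\xi$ is $N_{ab}$-invariant for $\bar\pi$. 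No hypothesis on $N$ is needed here.

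For the converse I would induct on the nilpotency class $c$ of $N$, the case $c=1$ being vacuous since then $[N,N]=\{e\}$. For $c\ge 2$ set $A:=\gamma_c(N)$, the last nontrivial term of the lower central series of $N$: it is abelian, central in $N$, characteristic in $N$ (hence normal in $G$) and contained in $[N,N]$, while $N/A$ is nilpotent of class $c-1$ with $[N/A,N/A]=[N,N]/A$, so that $(N/A)_{ab}=N_{ab}$ and $(G/A)/[N/A,N/A]=\bar G$. Hence, granting that $(\bar G,N_{ab})$ has relative Property (T), the inductive hypothesis applied to the nilpotent normal subgroup $N/A$ of $G/A$ shows that $(G/A,N/A)$ has relative Property (T); this is the fact I will feed back in at the end.

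The core of the argument is then as follows. Let $\pi$ be a unitary representation of $G$ with almost invariant vectors; I must produce a nonzero $N$-invariant vector. Write $\HH_\pi=\HH_\pi^{\pi(A)}\oplus\KK$, the decomposition into the $\pi(A)$-fixed subspace and its orthogonal complement; both summands are $G$-invariant because $A\trianglelefteq G$ and the induced $G$-action on $\widehat A$ fixes the trivial character. If the trivial representation of $G$ is weakly contained in the subrepresentation on $\HH_\pi^{\pi(A)}$, then on that subspace $G$ acts through $G/A$, so the inductive hypothesis applied to $(G/A,N/A)$ yields an $N/A$-invariant vector there, which is $N$-invariant in $\HH_\pi$, and I am done. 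The decisive case is when the trivial representation is weakly contained in $\pi|_\KK$, where $\pi(A)$ has no nonzero fixed vector. Then I would pass to the projection-valued measure $P$ of $\pi|_A$ on $\widehat A$ and, for a sequence of almost invariant unit vectors $\xi_n\in\KK$, study the probability measures $\mu_n:=\langle P(\cdot)\xi_n,\xi_n\rangle$ on $\widehat A$: they satisfy $\mu_n(\{1\})=0$, they converge to $\delta_1$ in the weak${}^*$ topology (almost invariance forces $\int_{\widehat A}\chi(a)\,d\mu_n(\chi)\to 1$ uniformly on compacta, exactly as around Lemma \ref{ev}), and they are asymptotically $G$-invariant since $\|g\cdot\mu_n-\mu_n\|_{\mathrm{TV}}\le 2\|\pi(g)\xi_n-\xi_n\|$.

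The hard part, I expect, is to rule out the existence of such a sequence $\{\mu_n\}$ while making the argument genuinely invoke the hypothesis on $N_{ab}$ rather than merely on $A$; this is the one place where nilpotency has to be used essentially. The mechanism is the commutator identity $[n,m^k]=[n,m]^k$ for commutators $a=[n,m]$ with $n\in N$ and $m\in\gamma_{c-1}(N)$ (which generate $\gamma_c(N)=A$, and for which all higher commutators vanish since $\gamma_{c+1}(N)=\{e\}$): this identity controls how the $N/A$-orbits fiber over the characters of $A$ in the Mackey description of $\widehat N$. Combining it with the asymptotic $G$-invariance of the $\mu_n$ and a GNS/measure construction in the spirit of Corollary \ref{splrepns} and Theorem \ref{prob}, one should be able to manufacture, after passing to a subsequence, a unitary representation of $\bar G$ with almost invariant vectors but no nonzero $N_{ab}$-invariant vector, contradicting the hypothesis. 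Once this transfer step is in place, the rest is routine bookkeeping with weak containment, direct sums and projection-valued measures.
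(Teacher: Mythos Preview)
The paper does not give its own proof of this statement: Theorem~\ref{thm-cms} is quoted verbatim from \cite[Theorem 1.2]{cms} and used as a black box in the proof of Proposition~\ref{prop-nilp}. There is therefore nothing in the paper to compare your proposal against.

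As for the proposal itself: the forward implication and the inductive scaffolding are fine, and the decomposition $\HH_\pi=\HH_\pi^{\pi(A)}\oplus\KK$ together with the reduction to the case where almost invariant vectors live in $\KK$ is the natural opening move. But the step you yourself flag as ``the hard part'' is left as a hope rather than an argument. You have asymptotically $G$-invariant probability measures $\mu_n$ on $\widehat A$ with $\mu_n(\{1\})=0$ and $\mu_n\to\delta_1$, and you want to contradict relative Property~(T) for $(\bar G,N_{ab})$. The obstruction is that $A=\gamma_c(N)\subset[N,N]$, so $A$ dies in $N_{ab}$: there is no homomorphism $\widehat A\to\widehat{N_{ab}}$ along which to push the $\mu_n$, and the machinery you invoke (Corollary~\ref{splrepns}, Theorem~\ref{prob}) lives on the dual of the abelian group you already have, not the one you need. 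The commutator identity $[n,m^k]=[n,m]^k$ does link the $N$-action on $\widehat A$ to characters of $\gamma_{c-1}(N)/A$, and ultimately to $N_{ab}$, but turning this into an honest unitary representation of $\bar G$ with almost invariant vectors and no $N_{ab}$-invariant vector is exactly the substantive content of the Chatterji--Witte-Morris--Shah paper; your sketch does not supply it. So the proposal identifies the right architecture but stops short of the one genuinely nontrivial step.
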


	Thus we have the following result.
	
	\begin{prop}\label{prop-nilp}
		Let $G$ be a group with Property (T) which acts by automorphisms on a nilpotent group $N$. Suppose $N_{ab}$ is finitely generated with rank $n$. Let $\phi:G\to\emph{GL}(n,\z)$ be the induced action of $G$ on the free part of $N_{ab}$.  
		\begin{enumerate}
			\item If $n=1$ or $2$, then $N\rtimes G$ does not have Property (T).
			\item If $n>2$ and $\phi(G)$ is Zariski dense in $\emph{SL}(n,\r)$, then $N\rtimes G$ has Property (T).
		\end{enumerate}
	\end{prop}
	
	\begin{proof}
		Since $G$ has Property (T), by Lemma \ref{extension}, $(N\rtimes G, N)$ has relative Property (T) if and only if $N\rtimes G$ has Property (T). Similarly, $(N_{ab}\rtimes G, N_{ab})$ has relative Property (T) if and only if $N_{ab}\rtimes G$ has Property (T). Thus Theorem \ref{thm-cms} tells us that $N\rtimes G$ has Property (T) if and only if $N_{ab}\rtimes G$ has Property (T). The statement now follows  from Proposition \ref{fingenab}.
	\end{proof}

	\section*{Acknowledgments} We thank Francois Dahmani for explaining to us a simple geometric proof of Theorem \ref{sun}. We also thank Indira Chatterji for
	helpful comments on an earlier draft. In particular, she asked us a version of Question \ref{q-hypemb} when $H$ is assumed to have the Haagerup property. Special thanks are due to the referee for a careful reading and several very helpful comments.

\end{document}